\documentclass[a4paper]{article}

\usepackage[backend=biber,style=alphabetic]{biblatex}
\usepackage{amsmath, amsfonts, amssymb, amsthm, mathtools, bbm} 
\usepackage{matlab-prettifier}
\usepackage{tikz, tikz-cd} 
\usepackage{tkz-euclide}
\usetikzlibrary{math,arrows.meta}
\usetikzlibrary{calc,intersections,positioning}
\usetikzlibrary {angles,quotes}
\newtheorem{theorem}{Theorem}
\newtheorem{proposition}[theorem]{Proposition} 
\newtheorem{lemma}[theorem]{Lemma}
\newtheorem{corollary}[theorem]{Corollary}
\theoremstyle{definition}
\newtheorem{definition}[theorem]{Definition}
\newtheorem*{defn*}{Definition}

\newtheorem{example}{Example}

\newcommand{\Z}{\ensuremath{\mathbb{Z}}} 
\newcommand{\F}{\ensuremath{\mathbb{F}}} 
\newcommand{\Ind}{\ensuremath{\text{Ind}}}

\newcommand{\Inf}{\ensuremath{\text{Inf}}}

\usepackage[english,greek]{babel}

\title{Non-Induced Representations in Characteristic Zero}
\author{Sean Dimmock}
\date{October 2026}

\begin{document}
\selectlanguage{english}
\maketitle

\begin{abstract}
This paper explores the failure of induction from all proper subgroups to generate the full representation ring, measured by a quotient group $Q_k(G)$. We extend existing results for cyclic groups to all finite abelian groups and calculate this quotient for all finite groups over fields of characteristic $0$ by reducing to the abelian case. We briefly discuss extensions to positive characteristics via modular representation theory. Finally, we detail our result for certain well-known families of groups.
\end{abstract}

\section{Introduction}
We study representations of a finite group $G$ via the \textit{representation ring} $R_k(G)$, where the elements are are integral combinations of isomorphism classes of $kG$-modules with addition and multiplication corresponding to the direct sum and tensor product. That is, for $[V],[U]\in R_k(G)$, we have;
\[
[V]+[U] = [V \oplus U] \ \text{and} \ [V]\cdot[U]=[V\otimes U].
\]

Formally, this is the split Grothendieck ring for the representations of the group $G$. This is different to the Grothendieck ring where we would have $[V]+[U]=[W]$ whenever
\[0\rightarrow V\rightarrow W\rightarrow U\rightarrow 0\]
is a short exact sequence. The split Grothendieck ring has a more restricted definition as it only accounts for exact sequences that split, and is thus better adapted for studying indecomposable representations. The elements of the representation ring are called \textit{virtual representations}, and the image of these under the character map to the class group $\mathcal{C}_G$ of G given by
\begin{align*}
    \chi:R_k(G)&\rightarrow\mathcal{C}_G\\\sum a_V[V]&\mapsto\sum a_V\chi_V
\end{align*}
are called \textit{virtual characters}.

We seek to understand $R_k(G)$ using the fundamental tool of induction. Induction from a subgroup $H<G$ gives rise to a well-defined additive homomorphism
\[\Ind_H^G:R_k(H)\rightarrow R_k(G).\]
Following from Corollary~$4.3.8(4)$, the image of this homomorphism is an ideal, and therefore it is natural to consider the quotients of $R_k(G)$ by these images for certain subgroups of $G$. We study objects of the form
\[\frac{R_k(G)}{\sum\limits_{H\in X}\Ind_H^GR_k(H)}\]
for $X$ some set of subgroups of $G$. A significant result in this area is Brauer's Induction Theorem, which asserts that the quotient is $0$ provided $X$ contains all elementary subgroups of $G$, with \textit{elementary} being defined as follows:
\begin{definition}
    A group $G$ is \textit{elementary} if it is the direct product of a $p$-group and a cyclic group with order coprime to $p$.
\end{definition}
For a thorough introduction to Brauer's Theorem and other relevant results see chapter~$10$ in \cite{Serre1977} or Section~$5.6$ in \cite{Benson1998vol1}. Moreover, J.A. Green showed in \cite{Green1955} in 1955 that no collection of subgroups smaller than the set of elementary subgroups can be used to make the quotient vanish. 

In particular, in this paper we consider the following construction:
\begin{definition}
    We define the \textit{ring of non-induced representations} to be the quotient ring
    \[Q_k(G):=\frac{R_k(G)}{I(G)}\]
    where
    \[I(G):=\sum_{H<G}I_H\]
    is known as the \textit{full induced ideal of $G$}, with
    \[I_H:=\Ind_H^GR_k(H)\]
    being the \textit{ideal induced by $H<G$}.
\end{definition}

It measures the failure of the proper subgroups of $G$ in inducing the virtual representations of $G$. Put another way, it describes the structure of the virtual representation classes of $G$ modulo induction. One use of this is to say $Q_k(G)=0$ if and only if every representation of $G$ is additively generated by induced representations. 

By Brauer's Induction Theorem and Green's result, $G$ is elementary if and only if $Q_k(G)\ne0$, since we are considering only proper subgroups, and so $G\notin X$. In Section~$2$, we will see this consequence of Brauer's Theorem in our results, but will not need to appeal to it in our proofs. In Section~$3$ we will assume this result.

To aid in understanding the representations of $G$, we want to understand the structure of $Q_k(G)$, hoping to relate it to the structure of $G$. In the following sections, we will work over characteristic $0$ unless otherwise stated. In Section~$2$, we will investigate the ring of non-induced representations for the finite abelian groups, extending the existing literature. In Section~$3$, we will extend this to all finite groups by reducing to the abelian case. Finally, in Section~$4$, we will provide some extra calculations of $Q_k(G)$ for certain families of groups that are particularly interesting, providing alternative family-specific proofs.  

I would like to thank Dr Robert Spencer for his advice and Professor Simon Wadsley for his help in verifying and improving this paper.

\section{Determining $Q_k(G)$ for Abelian Groups}

In this section we will determine $Q_k(G)$ for all finite abelian groups. 

\subsection{Cyclic Groups}

In 2021, Srihari considered this problem for the cyclic groups in the case of an algebraically closed field of characteristic $0$ in \cite{Srihari2021}, in which they proved:
\begin{theorem}[Theorem 1.1 in \cite{Srihari2021}]
    Let $C_n$ be a cyclic group of order $n$ and $k$ an algebraically closed field of characteristic $0$. Then
    \[Q_k(C_n)\cong\frac{\Z[X]}{(\Phi_n(X))}.\]
    where $\Phi_n(X)$ is the $n^{th}$ cyclotomic polynomial. In particular, the rank of the ring of non-induced representations is $\varphi(n)$.
\end{theorem}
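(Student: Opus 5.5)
We identify $R_k(C_n)$ with $\Z[X]/(X^n-1)$ and compute the induced ideal explicitly. Fix a generator $g$ of $C_n$ and a primitive $n$-th root of unity $\zeta\in k$; this exists since $k$ is algebraically closed of characteristic $0$. The irreducible representations are the one-dimensional characters $\chi_j(g)=\zeta^j$ for $0\le j<n$, and $\chi_i\chi_j=\chi_{i+j}$. By Maschke every module is semisimple, so $R_k(C_n)$ is free on the $\chi_j$. Sending $X\mapsto\chi_1$ therefore gives a ring isomorphism $R_k(C_n)\cong\Z[X]/(X^n-1)$.

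Next we compute each $I_H$. The proper subgroups are $H=\langle g^d\rangle$ for $d\mid n$ with $d>1$, of index $d$. It suffices to take the maximal ones, $d=p$ prime, since induction is transitive, so $I_H\subseteq I_{H'}$ whenever $H\le H'$. Because $I_H$ is an ideal (Corollary 4.3.8(4)), it is generated by $\Ind_H^G$ of the trivial representation, provided every irreducible of $H$ is a restriction from $G$. That holds here: every character of $H$ extends to $G$, and Frobenius reciprocity gives $\Ind_H^G(\mathrm{Res}\,\chi)=\chi\cdot\Ind_H^G 1$. Now $\Ind_H^G 1=k[G/H]$ is the sum of the characters trivial on $H$, namely the $\chi_j$ with $(n/d)\mid j$. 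Its image is
\[
1+X^{n/d}+X^{2n/d}+\dots+X^{(d-1)n/d}=\frac{X^n-1}{X^{n/d}-1}.
\]
Hence
\[
I(C_n)=\Bigl(\tfrac{X^n-1}{X^{n/p}-1}: p\mid n\Bigr)\subseteq \Z[X]/(X^n-1).
\]

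The main step is to show that the ideal $J=\bigl(X^n-1,\ \tfrac{X^n-1}{X^{n/p}-1}\ (p\mid n)\bigr)$ of $\Z[X]$ equals $(\Phi_n)$. We have $X^n-1=\prod_{e\mid n}\Phi_e$ and $X^{n/p}-1=\prod_{e\mid n/p}\Phi_e$. So each generator equals $\Phi_n\cdot F_p$, where $F_p=\prod\Phi_e$ over the divisors $e\mid n$ with $e\nmid n/p$ and $e\ne n$. This gives $J=\Phi_n\cdot(F_p : p\mid n)$, together with $\Phi_n F_0$ where $F_0=\prod_{e\mid n,\,e\neq n}\Phi_e$. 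It remains to show the $F_p$ generate the unit ideal in $\Z[X]$, not merely in $\mathbb{Q}[X]$. This integrality question is the expected obstacle.

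If $n=p^a$ there is one prime, and $F_p=1$ because every proper divisor of $p^a$ divides $p^{a-1}$. In general, $e\nmid n/p$ means $v_p(e)=v_p(n)$. Write $n=\prod p_i^{a_i}$. Then $F_{p_i}$ is the product of $\Phi_e$ over the proper divisors $e$ with $v_{p_i}(e)=a_i$, equivalently
\[
F_{p_i}=\frac{X^n-1}{(X^{n/p_i}-1)\Phi_n}.
\]
To show the $F_{p_i}$ generate $\Z[X]$, consider any maximal ideal $\mathfrak m$ of $\Z[X]$; its residue field is finite, say of characteristic $\ell$. Suppose, for a contradiction, that all $F_{p_i}$ lie in $\mathfrak m$. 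Let $x$ be the image of $X$, a root of unity of some order $m\mid n$ with $\ell\nmid m$. For each $i$, some $\Phi_e$ with $v_{p_i}(e)=a_i$ and $e\ne n$ vanishes at $x$. Mod $\ell$, $\Phi_e(x)=0$ forces $e=m\ell^{s}$. If $\ell\ne p_i$, this gives $v_{p_i}(m)=a_i$. So if $\ell$ is not one of the $p_i$, then $m=n$, and hence $e=n$, a contradiction. If $\ell=p_j$, then $m$ has full $p_i$-valuation for every $i\ne j$. The condition at $p_j$ then requires $e=m p_j^{a_j}=n$, again a contradiction. Thus $(F_p)=\Z[X]$ and $J=(\Phi_n)$, so
\[
Q_k(C_n)\cong\Z[X]/(\Phi_n).
\]
This quotient is free of rank $\deg\Phi_n=\varphi(n)$.
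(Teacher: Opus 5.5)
The paper does not prove this statement itself. It is quoted from Srihari and used as a black box, namely as the input for the cyclic factor in Theorem~11(2) and Theorem~16 via Lemma~10. So there is no in-paper argument to match. Your proof is correct and self-contained.

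\textbf{First half.} This is exactly the cyclic case of the paper's own mechanism in Lemma~5 and Proposition~6. There $\Ind_H^G\psi=\tilde\psi\,s_{H^\perp}$, so $I(G)$ is generated by subgroup-sums of $\hat G$. Your element $(X^n-1)/(X^{n/p}-1)$ is precisely $s_{H^\perp}$ for $H$ of index $p$. Your reduction to maximal $H$ by transitivity of induction is the dual of the paper's reduction to subgroups of order $p$ in $\hat G$.

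\textbf{Second half.} This is where you genuinely depart from anything in the paper. In the non-cyclic cases the paper never faces an integrality question. The $C_p\times C_p$ line-counting of Lemma~5 forces $pQ_k(G)=0$, and everything is then settled in $\F_p[A]$ by the $\mathfrak m$-adic/Nakayama argument. In the cyclic case $Q_k$ is torsion-free, so that device is unavailable. Your maximal-ideal argument supplies the real arithmetic content: reducing $\Phi_e$ modulo $\ell$ shows that the cofactors $F_p$ have no common zero over any finite field, hence generate the unit ideal of $\Z[X]$.

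\textbf{Comparison with the paper's framework.} One could try to stay inside the paper's framework by using Lemma~10 to reduce to $n=p^a$. There $(X^{p^a}-1)/(X^{p^{a-1}}-1)=\Phi_{p^a}$, so the claim is immediate. But one would then need $\bigotimes_i\Z[X]/(\Phi_{p_i^{a_i}})\cong\Z[X]/(\Phi_n)$, i.e.\ $\Z[\zeta_n]\cong\bigotimes_i\Z[\zeta_{p_i^{a_i}}]$. That is again an integrality statement, resting on coprime discriminants. Your route avoids it and proves the needed fact directly.

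\textbf{Two harmless implicit points.} First, $x^n=1$ holds because each $F_{p_i}$ divides $X^n-1$. Second, for $n=1$ it is $F_0=1$, not the empty family of $F_p$, that gives $J=(\Phi_1)$.
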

    
Later that year, Jolliffe and Spencer extended this result using modular representation theory in \cite{JolliffeSpencer2021} to the case of an algebraically closed field of positive characteristic not dividing the order of the group. Much of the notation used in this paper is continued from \cite{JolliffeSpencer2021}.
\begin{corollary}[Corollary 3 in \cite{JolliffeSpencer2021}]
    Let $C_n$ be a cyclic group of order $n$ and $k$ an algebraically closed field of characteristic $p$ such that $p\nmid n$. Then
    \[Q_k(C_n)\cong\frac{\Z[X]}{(\Phi_n(X))}.\]
    In particular, \textup{$\text{rank}(Q_k(C_n))=\varphi(n)$}.
\end{corollary}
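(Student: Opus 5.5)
Since $p\nmid n$, Maschke's theorem makes $kC_n$ semisimple. Because $k$ is algebraically closed of characteristic $p$, the polynomial $X^n-1$ is separable over $k$. It therefore has exactly $n$ distinct roots, forming a cyclic group $\mu_n(k)$ of order $n$. Fix a generator $g$ of $C_n$ and a primitive $n$-th root of unity $\zeta\in k$. The irreducible representations are then the one-dimensional $L_i$ on which $g$ acts by $\zeta^i$, for $0\le i<n$. We have $L_i\otimes L_j\cong L_{i+j}$, so $R_k(C_n)\cong\Z[C_n]\cong \Z[X]/(X^n-1)$ with $X\mapsto[L_1]$. This is exactly the description used in the characteristic $0$ case, so the plan is to reduce the statement to the same combinatorics as Theorem 1.1 in \cite{Srihari2021}.

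Next, compute the induced ideals. Every proper subgroup lies in a maximal one, and $I_H\subseteq I_{H'}$ when $H\le H'$ by transitivity of induction. So $I(C_n)=\sum_{q\mid n \text{ prime}} I_{C_{n/q}}$. For $H=C_{n/q}=\langle g^q\rangle$, Frobenius reciprocity (still valid since everything is semisimple) gives $\Ind_H^G\mathbf 1=\bigoplus_{j=0}^{q-1}L_{jn/q}$. In $\Z[X]/(X^n-1)$ this is $\Psi_q(X):=1+X^{n/q}+\dots+X^{(q-1)n/q}=\frac{X^n-1}{X^{n/q}-1}$.

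The image $I_H$ is an ideal, as noted in the introduction. Every irreducible of $H$ is a restriction of some $L_i$, so $\Ind_H^G\operatorname{Res}L_i\cong L_i\otimes\Ind_H^G\mathbf 1$, and hence $I_H=(\Psi_q(X))$. Therefore
\[
Q_k(C_n)\cong \Z[X]\big/\big(X^n-1,\ \Psi_q(X) : q\mid n\text{ prime}\big).
\]

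The remaining step, and the main obstacle, is the purely number-theoretic claim that the ideal $J=(X^n-1,\Psi_q : q\mid n)$ of $\Z[X]$ equals $(\Phi_n)$. Each $\Psi_q$ is the product of the $\Phi_d$ with $d\mid n$ and $d\nmid n/q$, which includes $\Phi_n$. So $J\subseteq(\Phi_n)$. Conversely, $\gcd$ over $\Q$ of the $\Psi_q$ is $\Phi_n$, but over $\Z$ we must show $\Phi_n\in J$ itself, not just some integer multiple. I would argue by induction on the number of prime factors of $n$, using the relation $\Psi_q(X)=\Phi_n(X)\cdot\prod_{d}\Phi_d(X)$. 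I would also use the fact that the resultant of $\Phi_a$ and $\Phi_b$ is $\pm1$ unless $a/b$ is a prime power. This lets me write $1$ as a $\Z[X]$-combination of the relevant cofactors, and in the bad prime-power cases the different primes $q$ supply coprime cofactors. Alternatively, since the ideal $J$ and this computation are literally the same as in characteristic $0$, I would simply cite the argument of \cite{Srihari2021}, observing that only the ring $R_k(C_n)\cong\Z[C_n]$ and the classes of $\Ind_H^G$ entered there. Either way, $Q_k(C_n)\cong\Z[X]/(\Phi_n)$. Since $\Phi_n$ is monic of degree $\varphi(n)$, the quotient is free abelian of rank $\varphi(n)$.
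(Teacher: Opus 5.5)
The paper gives no proof of its own here; it quotes the result from Jolliffe--Spencer, who obtain it using modular representation theory. Your argument is correct and more elementary. When $p\nmid n$ the polynomial $X^n-1$ is separable over $k$, so $kC_n$ is split semisimple with $n$ linear irreducibles and $R_k(C_n)\cong\Z[X]/(X^n-1)$. The ideal $I(C_n)$ is then generated by the classes $\Psi_q=(X^n-1)/(X^{n/q}-1)$ for primes $q\mid n$. These are exactly the subgroup-sums $s_{H^\perp}$ from the proof of Proposition~6, and your reduction to maximal $H$ corresponds to using the minimal nontrivial subgroups of $\hat{C_n}$. The identical computation over $\mathbb{C}$ gives the identical presentation, so $Q_k(C_n)\cong\Z[X]/J\cong Q_{\mathbb{C}}(C_n)$, and Srihari's Theorem~1.1 finishes the proof. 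You only need the abstract isomorphism $Q_{\mathbb{C}}(C_n)\cong\Z[X]/(\Phi_n)$, not any detail of Srihari's argument. What your route buys is transparency: in the coprime case, the characteristic-independence the paper remarks on is formal, because the ring together with its induction maps is literally the same as in characteristic $0$. What it gives up is anything about $p\mid n$, where $kC_n$ is no longer semisimple and this comparison breaks down.

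Your first option for proving $J=(\Phi_n)$ directly is not yet an argument. The cofactors $C_q:=\Psi_q/\Phi_n$ are not pairwise comaximal: for $n=30$, both $C_2$ and $C_3$ are divisible by $\Phi_6$. As written, the sketch does not explain how the pairwise resultants and the induction on the number of primes combine to produce $1$. If you want a self-contained proof, argue locally instead. For $n>1$ you have $J=\Phi_n\cdot(C_q: q\mid n)$, so it suffices to show that no maximal ideal $\mathfrak{M}$ of $\Z[X]$ contains every $C_q$. Suppose one did, with residue characteristic $\ell$. The image $\alpha$ of $X$ is an $n$-th root of unity, since $C_q\mid X^n-1$, and so has some order $m$ prime to $\ell$. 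Now $\Phi_d$ vanishes at $\alpha$ modulo $\ell$ exactly when the $\ell'$-part of $d$ equals $m$. Hence each prime $q\mid n$ yields a divisor $d_q\neq n$ of $n$ with $v_q(d_q)=v_q(n)$ and $\ell'$-part $m$. This forces $m$ to be the $\ell'$-part of $n$. But then $d_q=n$, taking $q=\ell$ if $\ell\mid n$, or any $q$ if $\ell\nmid n$, which is a contradiction.
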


Firstly, that it agrees with the characteristic $0$ case is expected, but reassures us that we are dealing with a sensible object. Secondly, that it is independent of characteristic suggests we are seeing some information about the group. Therefore, we hope that in this and other cases, the ring of non-induced representations may be a helpful tool in understanding the group and its representations.

\subsection{Finite Abelian Groups}
We will extend the work of the previous section to finite abelian groups via the Structure Theorem.

A finite abelian group $G$ can be written as a product of the $p$-primary components over primes $p$. Explicitly, $G=\prod\limits_{p||G|}G_p$ where each $G_p$ is a $p$-group containing all elements of $G$ with order a power of $p$. If, for a fixed $G$, we have
\[G_p=C_{p^{a_1}}\times C_{p^{a_2}}\times\cdots\times C_{p^{a_{r_p}}},\]
we say $G$ has $p$-primary component given by $(a_1,\ldots ,a_{r_p})$. We say that $G$ has a \textit{non-cyclic $p$-primary component} if $r_p>1$, and we call $r_p$ the \textit{$p$-rank} of $G$. We partition the set of finite abelian groups into the following three categories:
\begin{enumerate}
    \item The group $G$ has no non-cyclic components ($r_p\le1$ for all primes $p$).
    \item The group $G$ has exactly one non-cyclic component ($r_p>1$ for only one prime $p$).
    \item The group $G$ has more than one non-cyclic component ($r_p>1$ for more than one prime $p$).
\end{enumerate}
By the Chinese Remainder Theorem, the first case is a cyclic group, so Section~$2.1$ applies. For the other cases, we can prove the following result:
\begin{lemma}
    Let $G$ be a finite abelian group with non-cyclic $p$-primary component and $k$ an algebraically closed field of characteristic $0$. Then $Q_k(G)$ is an $\F_p$-algebra.
\end{lemma}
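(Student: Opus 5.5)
We need to show that $p\cdot 1 = p[k] \in I(G)$, where $[k]$ is the trivial representation. Since $I(G)$ is an ideal, it then follows that $p\cdot Q_k(G)=0$, so $Q_k(G)$ is an $\F_p$-algebra.

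Since $G$ has non-cyclic $p$-primary component, $G_p$ surjects onto $C_p\times C_p$. Extending by the projection onto $G_p$, we get a surjection $\pi:G\to C_p\times C_p$. Let $N=\ker\pi$. The group $C_p\times C_p$ has exactly $p+1$ subgroups $L_0,\dots,L_p$ of order $p$. Their preimages $H_i=\pi^{-1}(L_i)$ are proper subgroups of $G$ of index $p$, and $N=\bigcap_i H_i$ is also proper.

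We compute characters, which determine classes in $R_k(G)$ in characteristic $0$ over an algebraically closed field. Since $H_i$ is normal, $\chi_{\Ind_{H_i}^G k}(g)$ equals $p$ if $g\in H_i$ and $0$ otherwise. Likewise $\chi_{\Ind_N^G k}(g)$ equals $p^2$ if $g\in N$ and $0$ otherwise. Consider
\[
x=\sum_{i=0}^{p}[\Ind_{H_i}^G k]-[\Ind_N^G k].
\]
We evaluate its character $\chi_x(g)$ case by case.

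\begin{itemize}
\item If $g\in N$, then $g$ lies in all $p+1$ subgroups $H_i$, so $\chi_x(g)=(p+1)p-p^2=p$.
\item If $g\notin N$, then $\pi(g)$ is a nonzero element of $C_p\times C_p$. It generates exactly one $L_i$, so $g$ lies in exactly one $H_i$ and $\chi_x(g)=p$.
\end{itemize}

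Thus $\chi_x=p\chi_k$, so $p[k]=x\in I(G)$. (Alternatively, one could pull back the permutation-module identity from $C_p\times C_p$.)

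The only step needing care is the counting: each nonzero element of $C_p\times C_p$ lies in exactly one line, and every element lies in the trivial subgroup. This is a direct check rather than a real obstacle.

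The remaining ingredients are routine and can be assumed:
\begin{itemize}
\item the character formula for a module induced from a normal subgroup;
\item the fact that characters separate classes in $R_k(G)$ for $\operatorname{char}k=0$;
\item the fact that $I(G)$ is an ideal, which is stated in the introduction.
\end{itemize}

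Finally, note that $Q_k(G)$ could be the zero ring, for example when $G$ has two non-cyclic components. This is still consistent with $Q_k(G)$ being an $\F_p$-algebra.
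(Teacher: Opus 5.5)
Your proof is correct and is essentially the paper's argument: your $H_i$ and $N$ are exactly the paper's $H_L=L^\perp$ and $H_Y=Y^\perp$ for $Y=N^\perp\cong C_p\times C_p$ inside $\hat{G}$. Your identity $\sum_i[\Ind_{H_i}^G k]-[\Ind_N^G k]=p[k]$ is the paper's line-counting identity in the case $\chi$ trivial, and the only difference is that you verify it by evaluating characters pointwise on $G$ and then use that $I(G)$ is an ideal, whereas the paper decomposes into linear characters and runs the argument for every $\chi$ directly.
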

\begin{proof}
    Let $A:=\hat{G}=\text{Hom}(G,k^\times)$ be the character group of the abelian group $G$. The integral group ring $\Z[A]$ has basis elements $e_\chi$ satisfying $e_\chi e_\psi=e_{\chi\psi}$. Therefore, there is a ring homomorphism defined as
    \begin{align*}
        \Phi:\Z[A]&\rightarrow R_k(G)\\
        e_\chi&\mapsto[\chi]
    \end{align*}
    where $\chi$ is the unique irreducible representation with character $\chi$, and extending linearly. This is naturally surjective because for abelian groups each representation is a direct sum of characters in $A$, and injective because the irreducible representations give a basis for $R_k(G)$, so there are no non-trivial relations of them.

    Since $G$ has non-cyclic $p$-primary component, we know that the $p$-torsion subgroup 
    \[A[p]=\{\lambda\in A:\lambda^p=1\}\]
    contains a subgroup $Y\le A[p]$ such that $Y\cong C_p\times C_p$. 
    
    For any subgroup $L\le Y$ of order $p$, let
    \[H_L:=L^\perp=\{g\in G:\lambda(g)=1\ \forall\lambda\in L\}.\]
    Since $|L|=p$, we have $[G:H_L]=p$, so $H_L<G$.

    Fix some irreducible character $\chi\in\hat{G}$. By Frobenius reciprocity, for each $\psi\in\hat{G}$ we have 
    \[
    \langle\Ind_{H_L}^G\chi|_{H_L},\psi\rangle_G=\langle\chi|_{H_L},\psi|_{H_L}\rangle_{H_L}
        =\begin{cases}1 & \psi|_{H_L}=\chi|_{H_L}\\ 0 & \psi|_{H_L}\neq\chi|_{H_L}\end{cases}
    .\]
    Therefore, the irreducible components of $\Ind_{H_L}^G\chi|_{H_L}$ are precisely the characters of $G$ extending $\chi|_{H_L}$, namely $\{\chi\lambda:\lambda\in L\}.$ Therefore, by considering dimension,
    \[\Ind_{H_L}^G(\chi|_{H_L})=\sum_{\lambda\in L}\chi\lambda.\]
    Hence, this sum is $0$ in $Q_k(G)$. There are $p+1$ subgroups of order $p$ in $Y\cong\F_p^2$, which we can view as lines in an $\F_p$-vector space. The identity element must be in all the lines, while every non-identity element of $Y$ must lie on precisely one line. Above, we demonstrated that each line-sum is $0$, so the sum over all lines is also $0$ and thus,
    \begin{align*}
        0 &= \sum_{L}\sum_{\lambda\in L}\chi\lambda\\
        &= (p+1)\chi+\sum_{\lambda\in Y\setminus\{1\}}\chi\lambda\\
        &=p\chi + \sum_{\lambda\in Y}\chi\lambda.
    \end{align*}
    However, the same Frobenius reciprocity argument gives
    \[\Ind_{H_Y}^G(\chi|_Y)=\sum_{\lambda\in Y}\chi\lambda\]
    which must vanish in $Q_k(G)$. Combining these results gives $p\chi=0$. This holds for any irreducible character $\chi$, and therefore 
    \[pQ_k(G)=0\]
    because the irreducible characters generate $Q_k(G)$. We know $Q_k(G)$ is a unital $\Z$-algebra, and so $pQ_k(G)=0$ implies that it is in fact an $\F_p$-algebra.
\end{proof}

This result has two immediate corollaries. Firstly, when $G$ has more than one non-cyclic $p$-primary component we must have $Q_k(G)=0$ since it cannot be an algebra over $\F_p$ for two distinct primes simultaneously. This is expected because $G$ is not elementary. Secondly, with exactly one non-cyclic $p$-primary component we see that $Q_k(G)\cong(\Z/p\Z)^N$ as an additive group for some $N$.

We next find the full ring presentation of $Q_k(G)$. The remaining non-trivial case is when $G$ has exactly one non-cyclic $p$-primary component. To reach this, we first want to consider abelian non-cyclic $p$-groups.
\begin{proposition}
    Let $G$ be a non-cyclic abelian $p$-group and $k$ an algebraically closed field of characteristic $0$. Then
    \[Q_k(G)\cong\F_p[A]/J(G)\] where $A:=\hat{G}$ is the character group of $G$, and the ideal $J(G)$ is the image of $I(G)$ in $\F_p[A]$ by reducing mod $p$.
\end{proposition}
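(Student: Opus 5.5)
The plan is to combine the ring isomorphism $\Phi:\Z[A]\to R_k(G)$ from the proof of the preceding Lemma with that Lemma's conclusion $pQ_k(G)=0$. Then I will apply the third isomorphism theorem. A non-cyclic abelian $p$-group has non-cyclic $p$-primary component, so the Lemma applies to $G$.

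\emph{Step 1: transport to the group ring.} Let $\tilde I:=\Phi^{-1}(I(G))\subseteq\Z[A]$. Since $\Phi$ is a ring isomorphism and $I(G)$ is an ideal (each $I_H$ is an ideal, and a sum of ideals is an ideal), $\tilde I$ is an ideal and
\[Q_k(G)=R_k(G)/I(G)\cong\Z[A]/\tilde I.\]
Following the statement, I will identify $I(G)$ with $\tilde I$ and continue to write $I(G)$.

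\emph{Step 2: $p\Z[A]\subseteq I(G)$.} The Lemma gives $p\cdot x\in I(G)$ for every $x\in R_k(G)$, so $pR_k(G)\subseteq I(G)$. Pulling back along $\Phi$ gives $p\Z[A]\subseteq\tilde I$.

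\emph{Step 3: reduce mod $p$.} Consider the reduction map $\pi:\Z[A]\to\F_p[A]$. It is a surjective ring homomorphism with kernel $p\Z[A]$, since $\Z[A]$ is free on the $e_\chi$ and so $\Z[A]/p\Z[A]\cong\F_p[A]$. By definition $J(G)=\pi(\tilde I)$, and this is an ideal because $\pi$ is surjective. Because $\ker\pi\subseteq\tilde I$ by Step 2, the correspondence theorem gives $\pi^{-1}(J(G))=\tilde I$. Hence $\pi$ induces an isomorphism
\[\Z[A]/\tilde I\;\cong\;\F_p[A]/J(G).\]
Concretely, the composite $\Z[A]\to\F_p[A]\to\F_p[A]/J(G)$ is surjective, and its kernel is $\pi^{-1}(J(G))=\tilde I+p\Z[A]=\tilde I$. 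Combining this with Step 1 proves the Proposition.

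I do not expect a serious obstacle. The only point needing care is Step 2: the Lemma's conclusion $pQ_k(G)=0$ must be read as the ideal containment $pR_k(G)\subseteq I(G)$, rather than merely as a statement about the additive group. This reading is exactly what makes the kernel of $\pi$ lie inside $\tilde I$, so that $J(G)$ fully encodes $I(G)$. The real work, namely computing $J(G)$ explicitly as generated by the images of the elements $\sum_{\lambda\in L^\perp\text{-cosets}}$ from each maximal $H$, is not needed for this statement.
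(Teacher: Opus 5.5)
Your proposal is correct and follows essentially the same route as the paper: transport $I(G)$ to $\Z[A]$ via $\Phi$, use $pQ_k(G)=0$ from Lemma~5 to get $pR_k(G)\subseteq I(G)$, and then reduce modulo $p$ (the paper writes this last step as $\frac{\Z[A]/I(G)}{p(\Z[A]/I(G))}\cong \F_p[A]/J(G)$). The only difference is that the paper's proof also derives explicit generators for $I(G)$, first the subgroup-sums $s_B$ and then the $s_C$ with $|C|=p$; as you note, this is not needed for the isomorphism itself, but the paper uses it in Lemma~7 and Proposition~8.
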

\begin{proof}
     By Definition~$2$, to find $Q_k(G)$, we want to find the full induced ideal $I(G)$. Let $H<G$ and define the annihilator of $H$ to be 
    \[H^\perp=\{\chi\in A:\chi|_H=1\}.\]
    Let $\psi\in\hat{H}$ be a 1-dimensional representation of $H$, and $\tilde{\psi}\in A$ an extension of $\psi$, which exists by the Structure Theorem. By Frobenius reciprocity, we find
    \[\langle\Ind_H^G\psi,\chi\rangle_G=\langle\psi,\chi|_H\rangle_H=\begin{cases}1&\psi=\chi|_H\\0&\psi\neq\chi|_H\end{cases}\]
    and hence
    \begin{equation}
        \Ind_H^G\psi=\sum_{\lambda\in H^\perp}\tilde\psi\lambda.
    \end{equation}
    In the ring $\Z[A]$ we define the subgroup-sums 
    \[s_B:=\sum_{\lambda\in B}\lambda\]
    for each subgroup $B\le A$. Then we can rewrite equation~$(1)$ as
    \[\Ind_H^G\psi=\tilde\psi s_{H^\perp}.\]
    Therefore, we deduce that the full induced ideal can be generated by these subgroup-sums
    \[I(G)=(s_B:1\neq B\le A).\]
    So far, this holds for any abelian group since we are only using the fact that all representations are $1$-dimensional. We can improve the situation by also using the fact that $A$ is a $p$-group. Each non-trivial $B<A$ necessarily contains a subgroup $C\le B$ of order $p$, and therefore $B$ can be covered by cosets of $C$. The sum of the elements of a coset of $B\le A$ within $\Z[A]$ is a multiple of the subgroup-sum $s_B$. So, from our observation above, $s_C$ can be written as a linear combination of subgroup sums for subgroups of size $p$. Hence,
    \[I(G)=(s_C:C\le A,\ |C|=p).\]

    In Lemma~5 we showed that $pQ_k(G)=0$. Therefore, $pR_k(G)\subseteq I(G)$, and hence 
    \[Q_k(G)\cong\frac{\Z[A]/I(G)}{p(\Z[A]/I(G))}\cong\frac{\F_p[A]}{J(G)}\]
    where $J(G)$ is the image of $I(G)$ in $\F_p[A]$. The isomorphism is induced by the ring isomorphism in the proof of Lemma~5.
\end{proof}

Now the task is to make $J(G)$ more explicit. This reformulation allows us to make the following reduction.

\begin{lemma}
    Let $G$ be an abelian non-cyclic $p$-group with $p$-rank $r$ and $k$ an algebraically closed field of characteristic $0$. Then $J(G)=\F_p[A]\cdot J(B)$ where $A:=\hat{G}$ is the character group of $G$ and $B=A[p]\cong C_p^r$.
\end{lemma}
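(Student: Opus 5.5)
We need to interpret $J(B)$. By the preceding Proposition, $J(G)$ is generated in $\F_p[A]$ by the subgroup-sums $s_C$ for $C\le A$ of order $p$. Every subgroup of order $p$ of $A$ consists of elements with $\lambda^p=1$, so it lies in $B=A[p]$. Since $B\cong C_p^r$ with $r\ge 2$, $B$ is itself the character group of the non-cyclic abelian $p$-group $C_p^r$. The Proposition applied to that group shows that $J(B)\subseteq\F_p[B]$ is the ideal generated by the same elements $s_C$, with $C$ running over the same set of order-$p$ subgroups. Here we identify $\F_p[B]$ as a subring of $\F_p[A]$ via the inclusion $B\le A$. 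The lemma then reduces to comparing two ideals with the same generators.

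The proof has two inclusions. First, $J(B)\subseteq J(G)$: each generator $s_C$ of $J(B)$ is a generator of $J(G)$ because $C\le A$ and $|C|=p$. Since $J(G)$ is an ideal of $\F_p[A]$, it therefore contains $\F_p[A]\cdot J(B)$. Second, $J(G)\subseteq\F_p[A]\cdot J(B)$: $J(G)$ is generated as an $\F_p[A]$-ideal by the $s_C$ with $C\le A$ of order $p$. Each such $C$ is cyclic of order $p$, so $C\le A[p]=B$ and $s_C\in J(B)$. Hence every generator of $J(G)$ lies in $\F_p[A]\cdot J(B)$.

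The only genuine subtlety is justifying the description of $J(B)$. I will apply the Proposition and its proof, namely the reduction to order-$p$ subgroup-sums, to the group $G'=\hat B\cong C_p^r$, which is non-cyclic because $r\ge 2$. I then identify $\widehat{G'}$ with $B$ canonically. I also note that reduction mod $p$ commutes with the generator description, since the generators $s_C$ have integer coefficients $0$ or $1$. If one prefers to avoid the identification, the same argument can be phrased directly: $J(B)$ is by definition the ideal of $\F_p[B]$ generated by the $s_C$ with $C\le B$ of order $p$, and the two inclusions above go through unchanged.
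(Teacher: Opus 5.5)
Your proposal is correct and takes essentially the same approach as the paper: both rest on the observation that every order-$p$ subgroup $C\le A$ lies in $B=A[p]$, so the generators $s_C$ of $J(G)$ are exactly the generators of $J(B)$. You also spell out both inclusions, whereas the paper states only that each generator of $J(G)$ lies in $\F_p[B]$ and leaves the rest implicit.
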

\begin{proof}
    Let $x_1,\ldots,x_r$ be generators for each cyclic factor of $A$, with $|x_i|=p^{a_i}$. Then $B$ is generated by $z_i:=x_i^{p^{a_i-1}}$ and each has order $p$. Every subgroup of order $p$ in $A$ must lie inside $B$. Therefore, every generator $s_C$ of $J(G)$ belongs to $\F_p[B]\subseteq\F_p[A]$ and the result follows.
\end{proof}
So we only need to consider the case of $G=C_p^r$. We will now solve this case explicitly.
\begin{proposition}
    Let $G=C_p^r$ for a prime $p$ and integer $r>1$. Then 
    \[Q_k(G)\cong\frac{\F_p[u_1,\ldots,u_r]}{(u_1,\ldots,u_r)^{p-1}}\]
    as $\F_p$-algebras where $u_i:=x_i-1$, and $x_i$ are the generators of each cyclic component of $A:=\hat{G}\cong C_p^r$ for $1\le i \le r$.
\end{proposition}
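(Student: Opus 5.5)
By Proposition~6 we have $Q_k(G)\cong\F_p[A]/J(G)$, where $J(G)$ is generated by the subgroup-sums $s_C$ over all $C\le A$ of order $p$. So the plan is to identify $\F_p[A]$ as a truncated polynomial ring and then compute the ideal generated by the $s_C$.

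\textbf{Step 1: presenting $\F_p[A]$.} Since $A\cong C_p^r$ with generators $x_1,\ldots,x_r$, we have
\[\F_p[A]\cong\F_p[x_1,\ldots,x_r]/(x_i^p-1).\]
In characteristic $p$, $x_i^p-1=(x_i-1)^p=u_i^p$. Substituting $u_i=x_i-1$ therefore gives
\[\F_p[A]\cong\F_p[u_1,\ldots,u_r]/(u_1^p,\ldots,u_r^p),\]
a local ring with maximal ideal $\mathfrak m=(u_1,\ldots,u_r)$.

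\textbf{Step 2: computing a single generator.} A subgroup $C$ of order $p$ is generated by some $y=x^a:=\prod x_i^{a_i}$ with $a\neq 0$ in $\F_p^r$. Then
\[s_C=\sum_{j=0}^{p-1}y^j=\frac{y^p-1}{y-1}=(y-1)^{p-1}\]
in characteristic $p$, using $(y-1)^p=y^p-1$ and that $\F_p[A]$ is free over $\F_p[\langle y\rangle]$. Write $v:=y-1$. Since $y=\prod(1+u_i)^{a_i}$, we get $v=\ell_a+(\text{terms in }\mathfrak m^2)$, where $\ell_a=\sum a_iu_i$ is the linear form attached to $a$. Hence
\[J(G)=\bigl(v_a^{p-1}:a\in\F_p^r\setminus 0\bigr)\subseteq\mathfrak m^{p-1}.\]

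\textbf{Step 3: the reverse inclusion $\mathfrak m^{p-1}\subseteq J(G)$.} This is the main obstacle. Each generator has the form $v_a^{p-1}=\ell_a^{p-1}+(\text{higher order})$, and I would argue by descending induction on degree. The ring is finite-dimensional and $\mathfrak m^{r(p-1)+1}=0$, so it suffices to show two things: that the degree-$(p-1)$ parts $\ell_a^{p-1}$ span the space of homogeneous polynomials of degree $p-1$, and that the ideal generated by the $v_a^{p-1}$ contains all of $\mathfrak m^{p-1}$ by a Nakayama-type argument. For the second point, if $J+\mathfrak m^{p}\supseteq\mathfrak m^{p-1}$, then $\mathfrak m^{p-1}\subseteq J+\mathfrak m\cdot\mathfrak m^{p-1}$, and Nakayama gives $\mathfrak m^{p-1}\subseteq J$.

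The spanning claim is the crux. Powers of linear forms $\ell^{p-1}$ span $\mathrm{Sym}^{p-1}$ over $\F_p$ because the multinomial coefficients $\binom{p-1}{b}$ are nonzero mod $p$ for $|b|=p-1$; each is $\pm1$ up to units. Concretely, I would expand
\[\ell_a^{p-1}=\sum_{|b|=p-1}\binom{p-1}{b}a^b u^b\]
and show the monomial coefficient vectors are recovered by character sums over $a\in\F_p^r$. The key identity is that $\sum_{a}a^{c}$ is nonzero only when every $c_i$ is a positive multiple of $p-1$. Using this, each $u^b$ can be isolated by taking $\F_p$-combinations $\sum_a a^{b'}\ell_a^{p-1}$ with $b'_i=(p-1)-b_i$, where exponents $0$ are handled by restricting the sum to appropriate coordinate subspaces.

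\textbf{Conclusion.} With $J(G)=\mathfrak m^{p-1}$, and noting $u_i^p\in\mathfrak m^{p-1}$, we obtain
\[Q_k(G)\cong\F_p[u_1,\ldots,u_r]/(u_1,\ldots,u_r)^{p-1},\]
as claimed.
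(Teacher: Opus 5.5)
Your proof is correct and follows the paper's argument essentially step for step: the truncated-polynomial presentation of $\F_p[A]$, the identity $s_{L_a}=(g_a-1)^{p-1}$, the leading term $\ell_a^{p-1}$ modulo $\mathfrak m^p$, the spanning of degree-$(p-1)$ forms by $(p-1)$-th powers of linear forms, and Nakayama; the identity $s_{L_a}=(g_a-1)^{p-1}$ is better justified as the polynomial identity $\sum_{j=0}^{p-1}t^j=(t-1)^{p-1}$ in $\F_p[t]$ than by dividing by the nilpotent element $y-1$. The only variation is the spanning lemma, where the paper argues dually (a functional killing every $\ell^{p-1}$ gives a degree-$(p-1)$ polynomial vanishing on $\F_p^r$) while you invert explicitly; since the exponents $b'_i+b''_i$ sum to $r(p-1)$, homogeneity alone gives $\sum_{a\in\F_p^r}a^{b'}\ell_a^{p-1}=(-1)^r\binom{p-1}{b}u^b$, so you need no restriction to coordinate subspaces, and your remark that the multinomial coefficients are nonzero mod $p$ supplies a point the paper leaves implicit.
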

\begin{proof}
    We know from Lemma~5 that $R_k(G)\cong\Z[A]$ where $A:=\hat{G}$ is the character group. If $x_1,\ldots,x_r$ are generators of each cyclic component of $A$ then we can write
    \[\Z[A]\cong\frac{\Z[x_1,\ldots,x_r]}{(x_1^p-1,\ldots,x_r^p-1)}\] which reduces in characteristic $p$ to
    \[\F_p[A]\cong\frac{\F_p[u_1,\ldots,u_r]}{(u_1^p,\ldots,u_r^p)}\]
    where $u_i:=x_i-1$. Now, by Proposition~6, we wish to find $J(G)$. Let $\mathfrak{m}:=(u_1,\ldots,u_r)$. We will show that $J:=J(G)=\mathfrak{m}^{p-1}$.

    For a non-zero vector $a=(a_1,\ldots,a_r)\in\F_p^r$, define $g_a:=x_1^{a_1}\cdots x_r^{a_r}\in A$. Then $L_a:=\langle g_a\rangle$ is an order $p$ subgroup of $A$, and each subgroup of order $p$ can be written in this form. It has subgroup-sum given by
    \[s_{L_a}=1+g_a+\cdots+g_a^{p-1}\]
    which in characteristic $p$ becomes
    \[1+g_a+\cdots+g_a^{p-1}=(g_a-1)^{p-1}.\] These subgroup-sums generate $J$ since they cover all order $p$ subgroups of $A$, and thus
    \[J=((g_a-1)^{p-1}:a\neq0).\]
    But, by changing variables as before, we have
    \[g_a=\prod_{i=1}^r(1+u_i)^{a_i}\]
    which implies $g_a-1\in\mathfrak{m}$ and thus $(g_a-1)^{p-1}\in\mathfrak{m}^{p-1}$. Since we found that these forms generate $J$, this tells us that $J\subseteq\mathfrak{m}^{p-1}$.

    For the other direction, we begin by expanding this product:
    \[g_a-1=a_1u_1+\cdots+a_ru_r+(\text{higher order terms}).\]
    Define the linear form  $l_a:=a_1u_1+\cdots+a_ru_r$, and the remainder $h_a$ so that $g_a-1=l_a+h_a$ and $h_a\in\mathfrak{m}^2$. Therefore, 
    \[s_{L_a}=(g_a-1)^{p-1}=(l_a+h_a)^{p-1}.\]
    Since $h_a$ has degree at least $2$, the lowest degree term of the subgroup-sum $s_{L_a}$ is $l_a^{p-1}$. In other words, $s_{L_a}-l_a^{p-1}\in\mathfrak{m}^p.$ 
    
    Next, we will see that the $(p-1)$-powers of all linear forms span the set of all homogeneous degree-$(p-1)$ polynomials in $r$ variables. To see why, suppose we have a linear functional on the space of homogeneous polynomials of degree $p-1$ that vanishes on $l^{p-1}$ for every linear form $l$. This gives a homogeneous polynomial $F(a_1,\ldots,a_r)$ of degree $p-1$ which vanishes at all $a\in\F_p^r$. This can only be true if $F$ is identically zero. Therefore, the elements $l^{p-1}$ do indeed span. This means
    \[\frac{\mathfrak{m}^{p-1}}{\mathfrak{m}^p}\subseteq\frac{(J+\mathfrak{m}^p)}{\mathfrak{m}^p}.\]
    But the reverse inclusion is obvious from $J\subseteq\mathfrak{m}^{p-1}$, so
    \[J+\mathfrak{m}^p=\mathfrak{m}^{p-1}.\]
    Then the desired result follows by a simple application of Nakayama's Lemma.
\end{proof}

This is the crucial step in solving the general finite abelian group case, and we are not far from being able to write down a full statement. This particular result is also interesting as it reveals a rich structure.

First, we want to relate back to our original representation theory context. The ideal $\mathfrak{m}$ is naturally the kernel of the augmentation map
\begin{align*}
    \epsilon:\F_p[A]&\rightarrow\F_p\\
    \sum a_\chi e_\chi&\mapsto\sum a_\chi,
\end{align*}
and therefore we will call it the augmentation ideal of $\F_p[A]$.
The proof of proposition~8 shows that \(J(G)=\mathfrak m^{p-1}\). Hence
\[
Q_k(G)\cong \mathbb F_p[A]/\mathfrak m^{p-1}.
\]
Consider the \(\mathfrak m\)-adic filtration
\[
\mathbb F_p[A]\supseteq \mathfrak m\supseteq
\mathfrak m^2\supseteq\cdots.
\]
The associated graded ring is
\[
\operatorname{gr}_{\mathfrak m}(\mathbb F_p[A])
=\bigoplus_{d\geq 0}\mathfrak m^d/\mathfrak m^{d+1}.
\]
Since \(Q_k(G)\) is obtained by quotienting out \(\mathfrak m^{p-1}\), its dimension is the sum of the dimensions of the graded pieces. The associated graded ring is
\[
\operatorname{gr}_{\mathfrak m}(\mathbb F_p[A])
\cong \frac{\mathbb F_p[u_1,\ldots,u_r]}
{(u_1^p,\ldots,u_r^p)}.
\]
There are no restrictions for degree $d<p$, so the degree-\(d\) piece has dimension equal to the number of monomials of total degree \(d\) in \(r\) variables, namely \(\binom{r+d-1}{d}\). Therefore,
\[
\begin{aligned}
\dim_{\mathbb F_p} Q_k(G)
&=\sum_{d=0}^{p-2}\binom{r+d-1}{d}\\
&=\binom{r+p-2}{r}.
\end{aligned}
\]

\begin{example}
    Let $G=C_3^2$. Proposition~8 says
    \[Q_k(G)\cong\F_3[u,v]/(u,v)^2\]
    and has dimension $3$ as an $\F_p$-vector space. Therefore, if $u=[\chi]-[1]$ and $v=[\psi]-[1]$ then we have $u^2=uv=v^2=0$.
\end{example}

To finish dealing with finite abelian groups, we need to extend Proposition~8 to the general non-cyclic abelian $p$-group. We then want to understand how direct products behave in $Q_k(G\times H)$ to consider the cyclic factor in case $2$.

\begin{lemma}
    Let $G$ be a non-cyclic abelian $p$-group with $p$-primary component given by $(a_1,\ldots,a_r)$ and $k$ be an algebraically closed field of characteristic $0$. Then
    \[Q_k(G)\cong\frac{\F_p[u_1,\ldots,u_r]}{(u_1^{p^{a_1-1}},\ldots,u_r^{p^{a_r-1}})^{p-1}}\]
    as $\F_p$-algebras.
\end{lemma}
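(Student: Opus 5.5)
The plan is to combine Lemma~7, which reduces to the $p$-torsion subgroup $B=A[p]$, with the explicit computation of Proposition~8 for $B\cong C_p^r$, and then to identify the resulting ideal in the coordinates $u_i=x_i-1$ of $\F_p[A]$.

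First I will fix the presentation. Write $A=\hat G\cong C_{p^{a_1}}\times\cdots\times C_{p^{a_r}}$ with generators $x_i$ of order $p^{a_i}$. Then
\[\F_p[A]\cong\F_p[x_1,\ldots,x_r]/(x_1^{p^{a_1}}-1,\ldots,x_r^{p^{a_r}}-1)\cong\F_p[u_1,\ldots,u_r]/(u_1^{p^{a_1}},\ldots,u_r^{p^{a_r}}),\]
where $u_i:=x_i-1$. The second isomorphism uses the Frobenius identity $x^{p^a}-1=(x-1)^{p^a}$ in characteristic $p$. By Proposition~6 we have $Q_k(G)\cong\F_p[A]/J(G)$, so it suffices to compute $J(G)$ in these coordinates.

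Next, as in Lemma~7, $B$ is generated by $z_i:=x_i^{p^{a_i-1}}$, and the inclusion $B\le A$ induces an injective ring map $\F_p[B]\hookrightarrow\F_p[A]$. Using Frobenius again, the image of $w_i:=z_i-1$ is
\[w_i=x_i^{p^{a_i-1}}-1=(x_i-1)^{p^{a_i-1}}=u_i^{p^{a_i-1}}.\]
The generators of $J(G)$ are the subgroup-sums $s_C$ for $|C|=p$, and every such $C$ lies in $B$. These are literally the same elements as the generators of $J(B)\subseteq\F_p[B]$. Applying Proposition~8 to $B\cong C_p^r$ (with character-group coordinates $w_i$) gives $J(B)=(w_1,\ldots,w_r)^{p-1}$ inside $\F_p[B]$. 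Lemma~7 then yields
\[J(G)=\F_p[A]\cdot J(B)=\bigl(u_1^{p^{a_1-1}},\ldots,u_r^{p^{a_r-1}}\bigr)^{p-1}\F_p[A].\]

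One point needs care here. Proposition~8 is phrased for the group $G=C_p^r$ with $A=\hat G$, whereas here I apply it to the abstract elementary abelian group $B$. The proof of Proposition~8 is purely a statement about the ideal of $\F_p[B]$ generated by the order-$p$ subgroup-sums, so it transfers verbatim: take $B=\hat{G'}$ for $G'=C_p^r$. I would remark on this explicitly.

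Finally, I will lift to the polynomial ring. We get
\[Q_k(G)\cong\F_p[u_1,\ldots,u_r]\big/\bigl((u_i^{p^{a_i}})_i+\mathfrak n^{p-1}\bigr),\qquad \mathfrak n:=(u_1^{p^{a_1-1}},\ldots,u_r^{p^{a_r-1}}).\]
The step I expect to need checking is that the group-ring relations are absorbed. They are, because $u_i^{p^{a_i}}=\bigl(u_i^{p^{a_i-1}}\bigr)^p\in\mathfrak n^p\subseteq\mathfrak n^{p-1}$. This leaves exactly the claimed presentation $\F_p[u_1,\ldots,u_r]/\mathfrak n^{p-1}$, and it is an isomorphism of $\F_p$-algebras since every map used is a ring map.
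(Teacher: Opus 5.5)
Your proposal is correct and follows the paper's proof step for step. Like the paper, you use Proposition~6 for the presentation $\F_p[A]\cong\F_p[u_1,\ldots,u_r]/(u_i^{p^{a_i}})$, Lemma~7 to pass to $B=A[p]$, Proposition~8 applied to $B$, and the Frobenius identity $z_i-1=u_i^{p^{a_i-1}}$.

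You also make explicit two points the paper leaves implicit. The first is that the group-ring relations are absorbed, since $u_i^{p^{a_i}}\in\mathfrak n^{p}\subseteq\mathfrak n^{p-1}$. The second is that Proposition~8 applies to $B$ viewed as the character group of $C_p^r$.
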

\begin{proof}
    By Proposition~6 we know $Q_k(G)\cong\F_p[A]/J(G)$ where $A:=\hat{G}$. Let $x_1,\ldots,x_r$ be generators for the cyclic factors of $A\cong G$. Then we change variables to $u_i:=x_i-1$ as before. We find
    \[\F_p[A]\cong\frac{\F_p[u_1,\ldots,u_r]}{(u_1^{p^{a_1}},\ldots,u_r^{p^{a_r}})}\] synonymously to the change of variables in Proposition~$8$.

    By Lemma~$7$ we have $J(G)=\F_p[A]\cdot J(B)$ where $B:=A[p]\cong C_p^r$. The group $B$ has generators $z_i=x_i^{p^{a_i-1}}$ for $1\le i\le r$, and each has order $p$. Using variables $v_i:=z_i-1$, Proposition~8 can be applied to get 
    \[J(B)=(v_1,\ldots,v_r)^{p-1}.\]
    Now, in characteristic $p$ we have
    \[v_i=x_i^{p^{a_i-1}}-1=(x_i-1)^{p^{a_i-1}}=u_i^{p^{a_i-1}}.\] Then the desired result follows by the result of Lemma~7 as stated above.
\end{proof}

We need one final result to be able to combine the results for each $p$-primary components in the second case.
\begin{lemma}
    Let $G$ and $H$ be groups with coprime order and $k$ an algebraically closed field of characteristic $0$. Then
    \[Q_k(G\times H)\cong Q_k(G)\otimes_{\Z} Q_k(H)\]
    as rings.
\end{lemma}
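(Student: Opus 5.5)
The plan is to reduce everything to the tensor product decomposition of the representation ring, and then use right exactness of $\otimes_{\Z}$. Since $k$ is algebraically closed of characteristic $0$, the irreducible $k(G\times H)$-modules are exactly the external tensor products $V\boxtimes W$ with $V$ an irreducible $kG$-module and $W$ an irreducible $kH$-module. Hence $[V]\otimes[W]\mapsto[V\boxtimes W]$ gives a ring isomorphism
\[
\mu:R_k(G)\otimes_{\Z}R_k(H)\xrightarrow{\ \sim\ }R_k(G\times H),
\]
because it sends a $\Z$-basis to a $\Z$-basis and is compatible with tensor products. The same holds for any pair of subgroups $K\le G$, $L\le H$.

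The first key step is to describe the subgroups of $G\times H$. I claim that when $\gcd(|G|,|H|)=1$, every subgroup $M\le G\times H$ has the form $K\times L$ with $K=\pi_G(M)$ and $L=\pi_H(M)$. To see this, take $(g,h)\in M$ and write $m=|G|$, $n=|H|$. Choose an integer $e$ with $e\equiv 1 \pmod m$ and $e\equiv 0\pmod n$, which is possible by coprimality. Then $(g,h)^e=(g,1)\in M$, and similarly $(1,h)\in M$, so $M\supseteq \pi_G(M)\times\pi_H(M)\supseteq M$. Consequently, the proper subgroups of $G\times H$ are exactly the $K\times L$ with $K<G$ or $L<H$. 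Each of these is contained in $K\times H$ (if $K<G$) or in $G\times L$ (if $L<H$). By transitivity of induction, $\Ind_{K\times L}^{G\times H}R_k(K\times L)$ lies in the ideal induced from the larger subgroup. Therefore
\[
I(G\times H)=\sum_{K<G}I_{K\times H}+\sum_{L<H}I_{G\times L}.
\]

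The second step computes these ideals via $\mu$. Using the formula $\Ind_{K\times L}^{G\times H}(V\boxtimes W)\cong\Ind_K^GV\boxtimes\Ind_L^HW$, which follows from $k[G\times H]\cong kG\otimes_k kH$, and the fact that $R_k(K\times H)$ is spanned by the classes $[V\boxtimes W]$, we get
\[
\mu^{-1}(I_{K\times H})=I_K\otimes R_k(H)
\]
(the image in the tensor product), and symmetrically for $I_{G\times L}$. Summing over all proper subgroups gives
\[
\mu^{-1}(I(G\times H))=\operatorname{im}\bigl(I(G)\otimes R_k(H)\bigr)+\operatorname{im}\bigl(R_k(G)\otimes I(H)\bigr).
\]

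The final step is the standard algebra fact that for rings $R,S$ with ideals $I,J$,
\[
(R\otimes_{\Z}S)/(\operatorname{im}(I\otimes S)+\operatorname{im}(R\otimes J))\cong R/I\otimes_{\Z}S/J.
\]
This follows from right exactness of $\otimes_{\Z}$ applied twice, and the isomorphism is one of rings because all the maps are ring maps. Combining this with $\mu$ yields $Q_k(G\times H)\cong Q_k(G)\otimes_{\Z}Q_k(H)$.

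The main obstacle is the subgroup classification. This is the only place coprimality enters: without it, diagonal subgroups of $G\times H$ would contribute extra generators to $I(G\times H)$ that are not of the product form. The exponent trick above handles it, so I expect the rest of the argument to be bookkeeping.
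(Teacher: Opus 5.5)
Your proposal is correct and follows essentially the same route as the paper. Both arguments split subgroups of $G\times H$ using coprimality, identify $R_k(G\times H)\cong R_k(G)\otimes_{\Z}R_k(H)$ via external tensor products, factor induction to get $I(G\times H)=I(G)\otimes R_k(H)+R_k(G)\otimes I(H)$, and then pass to the quotient; you additionally supply details the paper leaves implicit (the exponent argument for the subgroup splitting, the reduction to the subgroups $K\times H$ and $G\times L$ by transitivity of induction, and the right-exactness justification of the final isomorphism).
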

\begin{proof}
    Since $G$ and $H$ have coprime order, each subgroup of $G\times H$ can be split \[K=K_G\times K_H\] for subgroups $K_G\le G,\ K_H\le H$. Also, irreducible characters of $G\times H$ are precisely products of irreducible characters from each factor. Hence,
    \[R_k(G\times H)\cong R_k(G)\otimes_{\Z} R_k(H).\] Moreover, induction factors as \[\Ind_{K_G\times K_H}^{G\times H}(V\otimes W)\cong\Ind_{K_G}^GV\otimes\Ind_{K_H}^HW.\] Hence the full induced ideal is
    \[I(G\times H)=I(G)\otimes R(H)+R(G)\otimes I(H).\]
    Therefore
    \[\frac{R(G\times H)}{I(G\times H)}\cong\frac{R(G)}{I(G)}\otimes_{\Z}\frac{R(H)}{I(H)}\]
    and the desired result follows.
\end{proof}
We have proven everything needed to classify $Q_k(G)$ for all finite abelian groups.
\begin{theorem}
    Let $G$ be a finite abelian group and $k$ an algebraically closed field of characteristic $0$. Then
    \begin{enumerate}
        \item if $G$ is cyclic of order $n$ we have:
        \[Q_k(G)\cong\frac{\Z[X]}{(\Phi_n(X))}\]
        and has rank $\varphi(n)$.
        \item if $G$ has exactly one non-cyclic $p$-primary component, we can write $G=G_p\times C_n$ for $p\nmid n$ and $p$-primary component $G_p$ being given by $(a_1,\ldots,a_r)$, and we have:
        \[Q_k(G)\cong\frac{\F_p[X,u_1,\ldots,u_r]}{(\phi_n(X),(u_1^{p^{a_1-1}},\ldots,u_r^{p^{a_r-1}})^{p-1})}\]
        and has $\F_p$-dimension \[\dim_{\F_p}Q_k(G)=\varphi(n)\cdot p^{a_1+\ldots+a_r-r}{p+r-2\choose r}.\]
        \item if G has more than one non-cyclic $p$-primary component we have:
        \[Q_k(G)=0.\]
    \end{enumerate}
    
\end{theorem}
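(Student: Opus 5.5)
The plan is to assemble the theorem from the earlier results, taking the three cases in the order of the partition into categories given at the start of Section~2.2.

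\emph{Case 1.} If $G$ is cyclic of order $n$, this is exactly Theorem~3 (Srihari's result). The rank statement follows because $\Phi_n$ is monic of degree $\varphi(n)$, so $\Z[X]/(\Phi_n(X))$ is free of rank $\varphi(n)$.

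\emph{Case 3.} Suppose $G$ has non-cyclic $p$-primary and $q$-primary components for distinct primes $p,q$. Lemma~5 applied at $p$ gives $pQ_k(G)=0$, and applied at $q$ gives $qQ_k(G)=0$. Since $\gcd(p,q)=1$, we get $1\cdot Q_k(G)=0$, so $Q_k(G)=0$.

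\emph{Case 2.} By the Structure Theorem and the Chinese Remainder Theorem, write $G=G_p\times C_n$ with $p\nmid n$. Here $G_p$ is given by $(a_1,\ldots,a_r)$ with $r>1$, and all other primary components are cyclic, so they combine into $C_n$. The orders are coprime, so Lemma~10 gives
\[Q_k(G)\cong Q_k(G_p)\otimes_\Z Q_k(C_n).\]
Lemma~9 identifies the first factor as $\F_p[u_1,\ldots,u_r]/(u_1^{p^{a_1-1}},\ldots,u_r^{p^{a_r-1}})^{p-1}$, and Theorem~3 identifies the second as $\Z[X]/(\Phi_n(X))$. The first factor is an $\F_p$-algebra, so tensoring over $\Z$ amounts to base-changing the second factor to $\F_p$. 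This yields the stated presentation
\[\F_p[X,u_1,\ldots,u_r]/\bigl(\Phi_n(X),(u_1^{p^{a_1-1}},\ldots,u_r^{p^{a_r-1}})^{p-1}\bigr),\]
where $\phi_n$ in the statement means the reduction of $\Phi_n$ mod $p$. This is the standard identity $(R/I)\otimes_\Z(S/J)\cong (R\otimes S)/(I\otimes S+R\otimes J)$ for polynomial rings.

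For the dimension count, $\F_p[X]/(\Phi_n)$ has dimension $\varphi(n)$ because $\Phi_n$ stays monic of degree $\varphi(n)$ mod $p$. The dimension of a tensor product of $\F_p$-algebras is multiplicative, so it remains to compute $\dim_{\F_p}Q_k(G_p)$. Put $w_i:=u_i^{p^{a_i-1}}$ and $S:=\F_p[w_1,\ldots,w_r]$. Then $P:=\F_p[u_1,\ldots,u_r]$ is a free $S$-module with basis the monomials $u^{e}$ with $0\le e_i<p^{a_i-1}$, and there are $p^{a_1+\cdots+a_r-r}$ of these. Write $\mathfrak n:=(w_1,\ldots,w_r)S$. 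The ideal in Lemma~9 is $\mathfrak n^{p-1}P$, and by freeness
\[P/\mathfrak n^{p-1}P\cong (S/\mathfrak n^{p-1})^{p^{\sum a_i - r}}.\]
The dimension of $S/\mathfrak n^{p-1}$ counts monomials of degree at most $p-2$ in $r$ variables, which is $\binom{r+p-2}{r}$, as computed after Proposition~8. Multiplying the factors gives the formula.

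I expect the main obstacle to be this freeness and base-change bookkeeping, namely checking that the tensor product over $\Z$ really collapses to the stated quotient ring and that the dimension multiplies as claimed. The rest is direct citation of earlier results.
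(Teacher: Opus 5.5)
Your proposal is correct and takes essentially the paper's route: the paper gives no separate proof of Theorem~11 and simply assembles it from Theorem~3, Lemma~5, Lemma~9 and Lemma~10, exactly as you do. Your freeness argument over $\F_p[u_1^{p^{a_1-1}},\ldots,u_r^{p^{a_r-1}}]$ usefully fills in the dimension count for general exponents $a_i$, which the paper carries out explicitly only for $C_p^r$ after Proposition~8.
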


\section{Determining $Q_k(G)$ for a General Finite Group}

In this section, we will compute $Q_k(G)$ for all finite groups over characteristic $0$ by reducing to the abelian case. By Brauer's Induction Theorem, we know all non-elementary groups have a trivial ring of non-induced representations. Therefore, the only finite groups that were not covered in Section~$2$ and are not trivial are the elementary groups with non-abelian $p$-primary component. That is, $G\cong P\times C_n$ with $p\nmid n$ and $P$ a non-abelian $p$-group. To find $Q_k(G)$ in this case, we can apply Lemma~10 to decompose the product as long as we can find $Q_k(P)$ for any $p$-group $P$.

\begin{lemma}[Corollary 5.3.4 of \cite{Webb2016}]
    Let $P$ be a $p$-group and $k$ an algebraically closed field of characteristic $0$. Then each non-linear irreducible representation of $P$ is of the form $\Ind_H^P\lambda$ for some $H<P$ and $\lambda$ an irreducible linear character of $H$.
\end{lemma}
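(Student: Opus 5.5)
We argue by induction on $|P|$, following the standard proof that $p$-groups are monomial (M-groups). Let $V$ be a non-linear irreducible representation of $P$ over $k$, with character $\chi$. If the statement holds for some proper subgroup, transitivity of induction lets us bootstrap. Concretely, suppose $V\cong\Ind_K^P W$ for a proper subgroup $K<P$ and an irreducible $kK$-module $W$. If $W$ is linear, we are done with $H=K$. Otherwise, the inductive hypothesis applied to the $p$-group $K$ gives $W\cong\Ind_H^K\lambda$ with $H<K$ and $\lambda$ linear. Then $V\cong\Ind_H^P\lambda$ with $H<P$. So the whole proof reduces to showing that every non-linear irreducible $V$ is induced from an irreducible representation of some proper subgroup.

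To produce such a $K$, we first pass to the faithful case. Let $N=\ker V$. If $N\neq1$, then $V$ is inflated from an irreducible non-linear representation of $P/N$. By induction, it is of the form $\Ind_{H/N}^{P/N}\lambda$, and inflation commutes with induction from subgroups containing $N$. This gives $V\cong\Ind_H^P(\Inf\lambda)$ with $H<P$ and $\Inf\lambda$ linear. So we may assume $V$ is faithful. Since $V$ is faithful and irreducible, Schur's lemma forces $Z(P)$ to act by scalars, so $Z(P)$ is cyclic. Because $V$ is non-linear and faithful, $P$ is non-abelian.

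The key group-theoretic step is to find a normal abelian subgroup $A\trianglelefteq P$ that is not central. Take $Z\le Z(P)$ of order $p$; it is unique since $Z(P)$ is cyclic. Now $P/Z(P)$ is a non-trivial $p$-group, so it has a normal subgroup of order $p$. Its preimage $A$ is normal in $P$, contains $Z(P)$ with $A/Z(P)$ cyclic, and hence is abelian but not contained in $Z(P)$. Since $V$ is faithful, $A$ does not act by scalars on $V$. By Clifford theory (the restriction-to-normal-subgroup argument via Frobenius reciprocity), $V|_A$ decomposes into more than one isotypic component for distinct linear characters of $A$, and $P$ permutes these components transitively. Let $K$ be the stabiliser of one component $U$. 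Then $K<P$ because there is more than one component, and $V\cong\Ind_K^P U$ with $U$ irreducible. This supplies the reduction needed in the first paragraph.

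I expect the main obstacle to be this Clifford-theoretic step: verifying that $V|_A$ is not isotypic and that $V$ is induced from the stabiliser of an isotypic component. Rather than citing Clifford's theorem, I will prove it directly. Define the map $\Ind_K^P U\to V$ via Frobenius reciprocity. It is surjective because $P$ permutes the components transitively and their sum is all of $V$. It is injective by comparing dimensions, since $[P:K]$ equals the number of components. Non-isotypicity holds because an isotypic restriction to an abelian group means $A$ acts by scalars, which together with faithfulness would put $A$ in $Z(P)$, contradicting the choice of $A$.
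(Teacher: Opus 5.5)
The paper does not prove this lemma: it quotes it from \cite{Webb2016} and uses it only as a black box to obtain Corollary~13. Your argument is a correct, self-contained proof, and it is the standard proof that $p$-groups are monomial. You reduce to a faithful $V$ using $\Inf_{P/N}^P\Ind_{H/N}^{P/N}\cong\Ind_H^P\Inf_{H/N}^H$ for $N\le H$. You then take a non-central normal abelian subgroup $A$, namely the preimage of an order-$p$ subgroup of $Z(P/Z(P))$. Clifford theory applied to $V|_A$ gives $V\cong\Ind_K^P U$ with $K<P$, and induction on $|P|$ together with transitivity of induction finishes the proof. Compared with the citation, your route makes explicit which features of $p$-groups are used: closure under subgroups and quotients, and the existence of a non-central normal abelian subgroup in every non-abelian member. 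So the same argument works verbatim for every finite nilpotent group, over any algebraically closed field whose characteristic does not divide the group order. The citation simply buys brevity, which is all the paper needs.

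One step is asserted rather than proved: that the isotypic component $U$ is an \emph{irreducible} $kK$-module. You need this to feed $W=U$ into your first paragraph. It is a one-line check. If $0\ne U'\subsetneq U$ were $K$-stable, then $\sum_{g\in P}gU'=\bigoplus_{gK\in P/K}gU'$ would be a non-zero $kP$-submodule of $V$ of dimension $[P:K]\dim U'<\dim V$, contradicting irreducibility of $V$. Likewise, transitivity of $P$ on the components should be justified by noting that the sum of the components in a single orbit is a non-zero $kP$-submodule of $V$. Your observations that $Z(P)$ is cyclic with a unique subgroup of order $p$ are correct but never used, and can be omitted.
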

From which immediately follows:
\begin{corollary}
    The quotient $Q_k(P)$ is generated by classes of linear characters of $P$.
\end{corollary}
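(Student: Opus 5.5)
The plan is to show that the classes of linear characters already additively generate $Q_k(P)$. Since $Q_k(P)=R_k(P)/I(P)$ is a quotient of $R_k(P)$, and $R_k(P)$ is additively generated by the classes $[V]$ of irreducible representations of $P$, it suffices to check that every irreducible $[V]$ becomes, in $Q_k(P)$, either zero or the image of a linear character.

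We split the irreducibles into two types. If $V$ is one-dimensional, $[V]$ is the class of a linear character, and nothing needs to be done. If $V$ is non-linear, the preceding lemma (Corollary~5.3.4 of \cite{Webb2016}) gives a proper subgroup $H<P$ and a linear character $\lambda$ of $H$ with $V\cong\Ind_H^P\lambda$. Then $[V]$ lies in $I_H=\Ind_H^PR_k(H)$, which is contained in the full induced ideal $I(P)$, so the image of $[V]$ in $Q_k(P)$ is zero.

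Combining the two cases, every generator of $R_k(P)$ maps either to $0$ or to the class of a linear character. Hence the images of the linear characters span $Q_k(P)$ additively, and so they certainly generate it as a ring. There is no real obstacle here, because the only substantive input is the cited lemma. The one point to check is that $H$ is a proper subgroup: if $H=P$, then $\Ind_H^P\lambda=\lambda$ would be linear, contradicting the assumption that $V$ is non-linear. So $H<P$, which is exactly what membership in $I(P)$ requires.
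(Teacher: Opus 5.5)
Your proposal is correct and matches the paper, which treats the corollary as immediate from the cited lemma: non-linear irreducibles of $P$ are induced from linear characters of proper subgroups, so they vanish in $Q_k(P)$, and only the linear characters remain. Your check that $H$ must be proper is right, and so is your observation that you get additive spanning, which is the form needed later for the surjectivity of $\Psi$.
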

Thus the generators for $Q_k(P)$ are the same as for $Q_k(P/P')$ where $P'$ is the commutator subgroup of $P$. We can then ask whether $Q_k(P)$ can see any of the non-abelian structure of $P$. Put another way, do the non-linear representations introduce any other relations that don't come from the linear representations?

Consider the \textit{inflation map} on representations defined as
\[\Inf_{P/P'}^P(\rho):=\rho\circ\pi\]
where here we are thinking of a representation as a homomorphism
\[\rho:P/P'\rightarrow\text{GL}(V)\] and $\pi$ is the quotient map.
Inflation naturally preserves direct sums and tensor products, and hence induces a ring homomorphism
\[\Inf:R_k(P/P')\rightarrow R_k(P).\] Our question is then whether this map descends to the quotient ring of non-induced representations as an isomorphism. To break this down, we define a map \[\Psi:R_k(P/P')\rightarrow Q_k(P)\] as $\Inf$ composed with the quotient map $R_k(P)\rightarrow Q_k(P)$. Then Corollary~13 asserts that $\Psi$ is surjective. Therefore, to answer our question, we need to find the kernel of $\Psi$. 

\begin{lemma}
     Let $P$ be a $p$-group and $\Psi$ be defined as above. Then \[\ker(\Psi)=I(P/P')\] with $I$ being the full induced ideal as in Definition~2.
\end{lemma}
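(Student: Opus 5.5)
The plan is to prove the two inclusions separately. The inclusion $I(P/P')\subseteq\ker(\Psi)$ follows from inflation commuting with induction. For the reverse inclusion $\ker(\Psi)\subseteq I(P/P')$, I will construct a left inverse of $\Inf$, namely deflation (taking $P'$-fixed points), and show that it carries $I(P)$ into $I(P/P')$. Throughout, write $N:=P'$ and $\bar P:=P/N$.

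For $I(P/P')\subseteq\ker(\Psi)$, take a proper subgroup $\bar H<\bar P$ and let $H$ be its preimage in $P$. Then $N\le H<P$, so $H$ is proper. I will check the identity
\[\Inf_{\bar P}^P\Ind_{\bar H}^{\bar P}V\cong\Ind_H^P\Inf_{\bar H}^H V.\]
This can be done directly on characters, since $N\le H$ means the induced character formula is constant on $N$-cosets. Alternatively it follows from Frobenius reciprocity. Consequently $\Inf(I_{\bar H})\subseteq I_H\subseteq I(P)$, and hence $I(\bar P)\subseteq\ker\Psi$.

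For the reverse inclusion, define the additive map
\[\mathrm{Def}:R_k(P)\rightarrow R_k(\bar P),\qquad [V]\mapsto[V^N].\]
It is well defined on the split Grothendieck ring, since fixed points preserve direct sums; in characteristic $0$ they are even exact. Clearly $\mathrm{Def}\circ\Inf=\mathrm{id}$, so if $\Inf(x)\in I(P)$ then $x=\mathrm{Def}(\Inf x)\in\mathrm{Def}(I(P))$. It therefore suffices to show $\mathrm{Def}(I_H)\subseteq I(\bar P)$ for every $H<P$. The key formula is
\[(\Ind_H^PW)^N\cong\Ind_{HN/N}^{\bar P}\bigl(W^{H\cap N}\bigr),\]
where $W^{H\cap N}$ is regarded as a representation of $H/(H\cap N)\cong HN/N$. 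I will prove it by Frobenius reciprocity: for any $U$ of $\bar P$ we have
\[\mathrm{Hom}_{\bar P}(U,(\Ind_H^PW)^N)=\mathrm{Hom}_P(\Inf U,\Ind_H^PW)=\mathrm{Hom}_H(\Inf U|_H,W).\]
Since $\Inf U$ is trivial on $H\cap N$, the last space equals
\[\mathrm{Hom}_{HN/N}(U|_{HN/N},W^{H\cap N})=\mathrm{Hom}_{\bar P}\bigl(U,\Ind_{HN/N}^{\bar P}W^{H\cap N}\bigr).\]
The formula then follows from Yoneda, or from semisimplicity and multiplicities in characteristic $0$.

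It remains to see that $HN/N$ is a proper subgroup of $\bar P$, i.e.\ that $HP'\ne P$ whenever $H<P$. This is where the $p$-group hypothesis enters. For a $p$-group, $P'\le\Phi(P)$, the Frattini subgroup, since $P/\Phi(P)$ is elementary abelian. If $HP'=P$ then $H\Phi(P)=P$, which forces $H=P$ by the non-generator property of $\Phi(P)$, a contradiction. Hence $\mathrm{Def}(\Ind_H^PW)\in I_{HN/N}\subseteq I(\bar P)$, and by additivity $\mathrm{Def}(I(P))\subseteq I(\bar P)$, which gives $\ker\Psi\subseteq I(\bar P)$.

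The main obstacle I expect is the fixed-points-of-induction formula, where the bookkeeping of identifying $H/(H\cap N)$ with $HN/N$ must be done carefully. The Frattini step is what prevents $HN/N=\bar P$; without it the argument would fail for general groups.
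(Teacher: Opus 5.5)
Your proof is correct. It shares the paper's skeleton more closely than it might appear. For $N=P'$ your deflation map is the paper's projection $l$ onto linear constituents: for an irreducible $V$, the subspace $V^{P'}$ is a $P$-subrepresentation, so it equals $V$ when $V$ is linear and $0$ otherwise. Both arguments therefore take a retraction of $\Inf$ and show it carries $I(P)$ into $I(P/P')$.

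The difference is in how that containment is checked. The paper computes $l(\Ind_H^P\lambda)=\lambda' s_{H^\perp}$ for irreducible $\lambda$ by Frobenius reciprocity, and recognises the result as the inflation of a subgroup sum in $\Z[\hat A]$. You instead prove the general formula $(\Ind_H^PW)^N\cong\Ind_{HN/N}^{\bar P}W^{H\cap N}$. Your formula is more conceptual, and the whole argument works unchanged for any normal subgroup $N\le\Phi(G)$ of any finite group $G$.

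Your version also makes explicit where the $p$-group hypothesis enters this lemma. The paper asserts that $\lambda' s_{H^\perp}$ is the inflation of an element of $I(A)$. That needs $\bar H=HP'/P'$ to be a proper subgroup of $A$, since otherwise $s_{\bar H^\perp}=1$. The paper uses this only implicitly; it is exactly your Frattini step $HP'\neq P$.

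You also justify the inclusion $I(P/P')\subseteq\ker\Psi$ through compatibility of inflation with induction from the preimage subgroup, which the paper calls clear. In exchange, the paper's computation is more hands-on and stays within the subgroup-sum description of induced ideals developed in Section 2.
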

\begin{proof}
    Let $A:=P/P'$ be the abelianisation of $P$, so it is an abelian $p$-group, and let 
    \[\Psi:R_k(A)\rightarrow Q_k(P)\]
    be as above. We can identify $\hat{A}$ with the group of linear characters of $P$. Due to corollary~13, we want to consider the additive projection onto the subgroup generated by the linear characters of $P$\[l:R_k(P)\rightarrow R^{\mathrm{linear}}_k(P).\]

    Let $H<P$ and let $\lambda$ be an irreducible character of $H$. By Frobenius reciprocity, we have
    \[\langle\Ind_H^P\lambda,\chi\rangle_P=\langle\lambda,\chi|_H\rangle_H.\]
    Therefore, if $\lambda$ is not the restriction of a linear character of $P$, then this inner product is $0$ for all $\chi$. This implies
    \[l(\Ind_H^P\lambda)=0\]
    since $\Ind_H^P\lambda$ can have only non-linear irreducible components. So we assume $\lambda=\lambda'|_H$ for some linear character $\lambda'$ of $P$. Then, using Frobenius reciprocity, we deduce
    \begin{equation}l(\Ind_H^P\lambda)=\lambda's_{H^\perp}\end{equation}
    where 
    \[s_{H^\perp}=\sum_{\mu\in H^\perp}\mu\]
    is the subgroup-sum defined in the previous section. This agrees with our results in the abelian case, except in that case we would have no non-linear characters. 

    The linear characters of $P$ will factor through $A$, so we are interested in 
    \[\bar H:=\frac{HP'}{P'}\le A\] 
    because then $H^\perp=\bar H^\perp$. Therefore, by equation (2) every element of $l(I(P))$ is the inflation of an element of $I(A)$. Hence,
    \[l(I(P))\subseteq \Inf(I(A)).\]
    For the converse, let $K\le\hat{A}$ and consider $H=\pi^{-1}(K^\perp)$ where $\pi:P\rightarrow A$ is the quotient map. Then, by definition, $H^\perp=K$, so
    \[l(\Ind_H^P\mathbbm{1}_H)=s_K.\]
    The subgroup-sums generate $I(A)$, so 
    \[\Inf(I(A))\subseteq l(I(P))\]
    and hence
    \begin{equation}l(I(P))=\Inf(I(A)).\end{equation}

    Now, let $x\in\ker(\Psi)$. This means $\Inf(x)\in I(P)$. But $\Inf(x)$ is a combination of linear characters, so applying $l$ and equation (3) gives
    \[\Inf(x)\in\Inf(I(A)).\]
    But $\Inf$ is injective, so $x\in I(A)$ and hence
    \[\ker(\Psi)\subseteq I(A).\]
    The converse is clear, since every subgroup-sum in $I(A)$ must map to zero in $Q_k(P)$. Therefore,
    \[\ker(\Psi)=I(A).\]
\end{proof}
\begin{corollary}
    If $P$ is a $p$-group then $Q_k(P)\cong Q_k(P/P').$
\end{corollary}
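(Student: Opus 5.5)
The corollary should follow from the first isomorphism theorem applied to $\Psi$. Put $A:=P/P'$. Let $\Psi:R_k(A)\rightarrow Q_k(P)$ be the inflation map $\Inf:R_k(A)\rightarrow R_k(P)$ followed by the quotient $R_k(P)\rightarrow Q_k(P)$. Both maps are ring homomorphisms: inflation preserves direct sums and tensor products, and $I(P)$ is an ideal. So $\Psi$ is a ring homomorphism, and it remains to show it is surjective and to find its kernel.

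For surjectivity, use Corollary~13: $Q_k(P)$ is generated by the classes of linear characters of $P$. Every linear character of $P$ has abelian image, so it contains $P'$ in its kernel. It therefore factors through $\pi:P\rightarrow A$, i.e.\ it equals $\Inf(\mu)$ for some $\mu\in\hat A$. Hence every generator of $Q_k(P)$ lies in the image of $\Psi$, and since the image is a subgroup, $\Psi$ is onto.

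For the kernel, Lemma~14 gives $\ker(\Psi)=I(P/P')=I(A)$. The first isomorphism theorem for rings then yields
\[Q_k(P)\cong R_k(A)/\ker(\Psi)=R_k(A)/I(A)=Q_k(P/P'),\]
and this is an isomorphism of rings.

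All the substantive work is already done in Lemma~14 and Corollary~13. The only point to check carefully is that $I(A)$ really is the full induced ideal of $A$ as in Definition~2, i.e.\ built from proper subgroups of $A$. This matters because the proof of Lemma~14 writes $I(A)$ as generated by subgroup-sums $s_K$, and these correspond to proper subgroups of $A$ only when $K\neq1$. If $K=1$, then $H=\pi^{-1}(1^\perp)=\pi^{-1}(A)=P$, which is not a proper subgroup. So I would restrict to nontrivial $K\le\hat A$. Then $K^\perp<A$ is proper, hence $H<P$ is proper, and the matching of generators in Lemma~14 is consistent with the definition of $I(A)$ used here.
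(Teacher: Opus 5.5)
Your proposal is correct and is exactly the paper's argument: $\Psi$ is surjective by Corollary~13, its kernel is $I(A)$ by Lemma~14, and the first isomorphism theorem gives the ring isomorphism. Your caveat that only nontrivial $K\le\hat A$ should be used in the converse half of Lemma~14 is right. The more substantive check, though, is in the forward half, which needs $HP'/P'$ to be proper in $A$ for every $H<P$; this holds because every maximal subgroup of a $p$-group is normal of index $p$ and so contains $P'$. That is where Lemma~14 genuinely uses that $P$ is a $p$-group, and the lemma fails without it, e.g.\ for $S_3$.
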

\begin{proof}
    Apply the First Isomorphism Theorem to $\Psi$ and use Lemma~14.
\end{proof}
Therefore, we are only interested in the abelian group $P/P'$ and so we can apply Section $2$.
\begin{theorem}
    Let $G$ be a finite group and $k$ an algebraically closed field of characteristic $0$. If $G$ is not elementary then $Q_k(G)=0$. If $G$ is cyclic of order $n$ then
    \[Q_k(C_n)\cong\frac{\Z[X]}{(\Phi_n(X))}.\]
    Otherwise, we can write $G\cong P\times C_n$ for some prime $p$ and non-trivial $p$-group $P$, and some integer $n$ such that $p\nmid n$. Suppose $P/P'\cong C_{p^{a_1}}\times\cdots\times C_{p^{a_r}}$ for $r\ge1$ and $a_i>0$. Then
    \[Q_k(G)\cong\frac{\F_p[X,u_1,\ldots,u_r]}{(\phi_n(X),(u_1^{p^{a_1-1}},\ldots,u_r^{p^{a_r-1}})^{p-1})}.\]
\end{theorem}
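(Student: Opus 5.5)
The proof assembles the earlier results case by case.

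\emph{Non-elementary $G$.} Brauer's Induction Theorem says $R_k(G)=\sum_{H\in X}\Ind_H^G R_k(H)$ whenever $X$ contains all elementary subgroups. If $G$ is not elementary, every elementary subgroup is proper, so $I(G)=R_k(G)$ and $Q_k(G)=0$.

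\emph{Cyclic $G$.} If $G=C_n$, the formula is case~1 of Theorem~11, which is Srihari's theorem.

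\emph{Elementary, non-cyclic $G$.} Write $G\cong P\times C_n$ with $P$ a non-trivial $p$-group and $p\nmid n$.

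The first step is to apply Lemma~10. Since $|P|$ and $n$ are coprime, it gives
\[Q_k(G)\cong Q_k(P)\otimes_{\Z}Q_k(C_n).\]
By Corollary~15, $Q_k(P)\cong Q_k(P/P')$. Since $P/P'\cong C_{p^{a_1}}\times\cdots\times C_{p^{a_r}}$ is abelian, the earlier abelian results apply.

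If $r\ge 2$, then $P/P'$ is a non-cyclic abelian $p$-group. Lemma~9 gives
\[Q_k(P/P')\cong \F_p[u_1,\ldots,u_r]\big/\bigl(u_1^{p^{a_1-1}},\ldots,u_r^{p^{a_r-1}}\bigr)^{p-1}.\]
Tensoring over $\Z$ with $\Z[X]/(\Phi_n(X))$ gives
\[\F_p[u]/(\ldots)\otimes_{\Z}\Z[X]/(\Phi_n)\cong \F_p[X,u_1,\ldots,u_r]\big/\bigl(\Phi_n(X),(\ldots)^{p-1}\bigr),\]
which is the claimed presentation. Alternatively one can reuse case~2 of Theorem~11 for the abelian group $P/P'\times C_n$.

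If $r=1$, then $P/P'$ is cyclic of order $p^{a_1}$. By Srihari's theorem, $Q_k(P/P')\cong\Z[u]/\Phi_{p^{a_1}}(1+u)$. This is not a priori an $\F_p$-algebra, so it has to be reconciled with the stated formula $\F_p[X,u]/(\Phi_n(X),u^{p^{a_1-1}(p-1)})$. The idea is that a non-abelian $p$-group with cyclic abelianisation is impossible. Indeed, Burnside's basis theorem says that if $P/P'$ is cyclic then $P/\Phi(P)$ is cyclic, so $P$ is cyclic and hence abelian. Therefore $r=1$ forces $P$ to be cyclic, and then $G\cong P\times C_n$ is cyclic, which is the case already treated. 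So in the "otherwise" case we always have $r\ge 2$, and the stated formula is consistent with the intended reading.

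\emph{Main obstacle.} The delicate step is the tensor product over $\Z$ in the $r\ge2$ case. One must check that
\[\F_p\otimes_{\Z}\Z[X]/(\Phi_n)\cong\F_p[X]/(\Phi_n)\]
with no extra torsion appearing, and that tensoring the two polynomial presentations gives exactly the combined presentation. This is immediate from right exactness of $\otimes$ applied to the two presentations.
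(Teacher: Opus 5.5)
Your proof is correct and follows the same route as the paper: Brauer's theorem for non-elementary $G$, Lemma~10 to split off $C_n$, Corollary~15 to replace $P$ by $P/P'$, and then the abelian results of Section~2 (Lemma~9, equivalently case~2 of Theorem~11). Your Burnside basis theorem remark that $r=1$ cannot occur in the non-cyclic elementary case is a worthwhile addition the paper omits. The statement allows $r\ge1$, but the given formula is only valid (and only needed) for $r\ge2$.
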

\begin{proof}
    Corollary~15 reduces the problem to the abelian case. Then we can apply Theorem~11 and Lemma~10 to complete the calculation.
\end{proof}

\section{Extra Calculations of $Q_k(G)$}
In this section, we compute $Q_k(G)$ for two families of groups; symmetric and dihedral. We will find the result that follows from Theorem~16, and then find an elementary proof using our knowledge of the representations in each case.

\subsection{Symmetric Groups}
Here, we will consider the ring of non-induced representations for the symmetric groups $S_n$ for $n\ge 3$. An elementary group $G\cong P\times C_n$ will have $C_n\le Z(G)$ since $C_n\le G$ is normal and central. However, for $n\ge 3$ we have $Z(S_n)=\{e\}.$ It obviously can never be a $p$-group, so $S_n$ is not elementary for all $n\ge3$. Therefore, Theorem~16 in this case says:
\begin{theorem}
    Let $k$ be an algebraically closed field of characteristic $0$. Then 
    \[Q_k(S_n)=0.\]
    for every integer $n\ge 3$.
\end{theorem}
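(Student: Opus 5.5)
The plan is to apply the first clause of Theorem~16: if $G$ is not elementary then $Q_k(G)=0$. So the whole task is to show that $S_n$ is not elementary for $n\ge3$, i.e.\ that $S_n\not\cong P\times C_m$ with $P$ a $p$-group and $p\nmid m$.

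First, treat the case where the cyclic factor is trivial. If $m=1$, then $S_n$ itself would be a $p$-group. This is impossible for $n\ge3$. Indeed, $S_n$ contains the transposition $(1\,2)$ of order $2$ and the $3$-cycle $(1\,2\,3)$ of order $3$. So $|S_n|$ is divisible by both $2$ and $3$ and is not a prime power.

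Second, treat the case $m>1$. A decomposition $S_n\cong P\times C_m$ makes the image of $C_m$ a nontrivial subgroup that commutes with every element of the factor $P$. It also commutes with every element of $C_m$, since $C_m$ is abelian. Hence $C_m\le Z(S_n)$. It then suffices to recall that $Z(S_n)=\{e\}$ for $n\ge3$. To see this, take $\sigma\ne e$ with $\sigma(i)=j\ne i$, and choose $l\notin\{i,j\}$, which is possible since $n\ge3$. Then $\tau=(j\,l)$ satisfies
\[\tau\sigma\tau^{-1}(i)=l\ne j=\sigma(i),\]
so $\sigma$ is not central. Thus $m=1$, which brings us back to the first case and gives a contradiction.

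Together the two cases show that $S_n$ is not elementary for every $n\ge3$, and Theorem~16 gives $Q_k(S_n)=0$. There is no real obstacle here. The only point needing care is the centrality of the $C_m$ factor in a direct product, together with the centreless argument, and that argument uses $n\ge3$ exactly once, to choose $l$. If one wanted to avoid Theorem~16 (which relies on Brauer's theorem), an alternative would be to use the explicit construction from Lemma~5 with a non-cyclic abelian subgroup. That is more work and is not needed here.
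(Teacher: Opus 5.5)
Your argument is correct, and it is the deduction the paper gives in the paragraph just before the theorem: the cyclic factor of an elementary group is central, $Z(S_n)=\{e\}$ for $n\ge 3$, and $S_n$ is not a $p$-group, so Theorem~16 applies. You also supply the details the paper omits, namely the explicit check that the centre is trivial and that both $2$ and $3$ divide $|S_n|$.

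The proof the paper attaches to the theorem is different. It aims to avoid Brauer's theorem by killing the Young permutation modules $M^\lambda=\Ind_{S_\lambda}^{S_n}\mathbbm{1}$ for $\lambda\neq(n)$ via unitriangularity of the Kostka matrix, and then killing $[\mathbbm{1}]$ via $\Ind_{S_{n-1}}^{S_n}\mathbbm{1}=\mathbbm{1}\oplus S^{(n-1,1)}$. That would give a self-contained, Brauer-free proof, but as written it does not close:
\begin{itemize}
\item Since $K_{(n)\lambda}=1$ for every $\lambda$, the dominance induction only gives $[S^\lambda]\in\Z\cdot[\mathbbm{1}]$, not $[S^\lambda]=0$.
\item $\Ind_{S_{n-1}}^{S_n}\mathbbm{1}$ is just $M^{(n-1,1)}$ again, so the final step is circular.
\end{itemize}
In fact no argument using Young subgroups alone can work. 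Evaluating characters at an $n$-cycle is a homomorphism $R_k(S_n)\to\Z$ that vanishes on everything induced from an intransitive subgroup, yet sends $[\mathbbm{1}]$ to $1$. A direct proof must therefore use subgroups containing $n$-cycles. For $n=3$, inducing a faithful character of $\langle(1\,2\,3)\rangle$ gives $[S^{(2,1)}]=0$, which finishes the argument.

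So your route through Theorem~16 is not only shorter; it is the one that actually establishes the statement. Your closing suggestion of a Brauer-free proof via Lemma~5 is not viable as stated ($S_3$ has no non-cyclic abelian subgroup), but nothing in your proof depends on it.
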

We are able to provide a direct proof of this using some theory for the representations of symmetric groups. To do this, we will use the lecture notes for the course \textit{Representation Theory of the Symmetric Groups} taught by Stacey Law \cite{law2022representation}.

\begin{proof}
    Let $S_\lambda=S_{\lambda_1}\times\cdots\times S_{\lambda_r}$ be the Young subgroup for a partition $\lambda\vdash n$. Let $S_n$ act on the cosets of this Young subgroup. If $S_\lambda<S_n$ (so $\lambda\ne(n)$)
    then its permutation representation is induced from the trivial representation:
    \[M^\lambda=\Ind_{S_\lambda}^{S_n}\mathbbm{1}\]
    and therefore it vanishes in $Q_k(S_n)$. The standard decomposition of Young permutation modules is:
    \[M^\lambda=\sum_{\mu\vdash n}K_{\mu\lambda}S^\mu\]
    where $S^\mu$ is the irreducible Specht module and $K_{\mu\lambda}$ is a Kostka number. Crucially, $K_{\lambda\lambda}=1$ and $K_{\mu\lambda}\ne 0\implies\mu\trianglerighteq\lambda$, so if we order by dominance then $(K_{\mu\lambda})$ is unitriangular.
    Therefore, since the permutation modules vanish, this gives 
    \[[S^\lambda]=0\]
    for every $\lambda\ne (n)$ by induction on dominance order.

    Now, it suffices to check the case $S^{(n)}=\mathbbm{1}$. Notice that for $n\ge3$, we have
    \[\Ind_{S_{n-1}}^{S_n}\mathbbm{1}=\mathbbm{1}\oplus S^{(n-1,1)}\]
    which translates in $Q_k(S_n)$ to
    \[[\mathbbm{1}]+[S^{(n-1,1)}]=0.\]
    But we have already shown the second term vanishes. Thus, $[\mathbbm{1}]=0$, and therefore
    \[Q_k(S_n)=0\]
    for $n\ge3$.
\end{proof}

This result is no surprise for the symmetric groups since it has many subgroups. More to the point:
\begin{quote}
    The problem with the symmetric groups is that there is too much symmetry and not enough group. 
    \\- G.D. James
\end{quote}
Therefore, it was not a stretch to find a simple proof for this case by exploiting the symmetry.

\subsection{Dihedral Groups}
Next, we will consider the ring of non-induced representations for the dihedral groups $D_{2n}$. It is well-known that
\[Z(D_{2n})=\begin{cases}\{e\}&\text{if $n$ odd}\\\{e,r^{n/2}\}&\text{if $n$ even}\end{cases}\]
Supposing $G\cong P\times C_n$, we know from Section $4.1$ that $C_n\le Z(G)$. So, if $n$ is odd then $Z(D_{2n})=\{e\}$ implies that, to be elementary, $D_{2n}$ must be a $p$-group, which is not possible. Therefore, $n$ must be even. Moreover, the center of a non-trivial $p$-group is non-trivial, which means $p\mid|Z(G)|$ by Lagrange's Theorem. Therefore, for $D_{2n}$ to be elementary we must have $n$ being a power of $2$ so that $D_{2n}$ is a $2$-group. So, in this case, Brauer's Induction Theorem implies
\[Q_k(D_{2n})=0\]
unless $n$ is a power of $2$. In this case, we have
\[D_{2n}/D_{2n}'\cong C_2\times C_2\]
and hence Corollary~15 gives
\[Q_k(D_{2n})\cong Q_k(C_2\times C_2)\cong \F_2\]
whenever $n$ is a power of $2$.

Therefore, our theorem has given us the full result for the dihedral groups. We will now provide an alternative elementary proof using the representation theory of the dihedral groups.
\begin{theorem}
    Let $D_{2n}$ be the dihedral group of order $2n$ and $k$ be an algebraically closed field of characteristic $0$. Then
        \[Q_k(D_{2n})\cong\begin{cases}\F_2&\text{if $n$ is a power of $2$}\\0&\text{otherwise.}\end{cases}\]
\end{theorem}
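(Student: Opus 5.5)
We will give an elementary proof using the explicit irreducible representations of $D_{2n}=\langle r,s\mid r^n=s^2=1,\ srs^{-1}=r^{-1}\rangle$. The two-dimensional irreducibles are
\[\rho_j=\Ind_{\langle r\rangle}^{D_{2n}}\omega^j,\qquad 0<j<n/2,\]
where $\omega$ is a faithful character of $\langle r\rangle\cong C_n$. Since $\langle r\rangle$ is proper, every $\rho_j$ vanishes in $Q_k(D_{2n})$. Hence $Q_k(D_{2n})$ is generated by the linear characters. For $n$ odd these are $\mathbbm{1}$ and the sign character $\varepsilon$ (trivial on $r$). For $n$ even they are $\mathbbm{1},\varepsilon,\delta,\varepsilon\delta$, where $\delta(r)=-1$ and $\delta(s)=1$.

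\textbf{Relations from the rotation subgroup.} Inducing the trivial character of $\langle r\rangle$ gives $\mathbbm{1}+\varepsilon=0$. When $n$ is even, inducing $\omega^{n/2}$ gives $\delta+\varepsilon\delta=0$.

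\textbf{Relations from reflection subgroups.} The subgroup $\langle s\rangle\cong C_2$ is proper. By Frobenius reciprocity, $\Ind_{\langle s\rangle}^{D_{2n}}\mathbbm{1}$ contains each linear character trivial on $s$ once, and each $\rho_j$ once, since $\rho_j|_{\langle s\rangle}$ is the regular representation. For $n$ odd this gives $\mathbbm{1}+\sum_j\rho_j=0$, hence $\mathbbm{1}=0$ and $Q_k=0$.

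For $n$ even we pass to the dihedral subgroups $D_{2m}=\langle r^{n/m},s\rangle$ and $\langle r^{n/m},rs\rangle$ with $m\mid n$, $m<n$.

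\textbf{Case $n$ even, not a power of $2$.} Write $n=2^a q$ with $q>1$ odd, and take $m=2^a$. The index is then odd. Inducing the trivial character of $\langle r^q,s\rangle$ gives $\mathbbm{1}+\delta^{?}+(\text{two-dimensionals})$. Whether $\delta$ occurs depends on whether $\delta(r^q)=(-1)^q=-1$: since $q$ is odd, $\delta$ is nontrivial on $r^q$ and so does not appear. Thus $\mathbbm{1}=0$, and multiplying by the linear characters (using that $I$ is an ideal) kills everything, giving $Q_k=0$.

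\textbf{Case $n=2^a$.} We get the ideal relations above: $\varepsilon=-\mathbbm{1}$ and $\varepsilon\delta=-\delta$. The two index-$2$ subgroups $\langle r^2,s\rangle$ and $\langle r^2,rs\rangle$ give $\mathbbm{1}+\delta=0$ and $\mathbbm{1}+\varepsilon\delta=0$ respectively, after removing $\rho_j$'s. Combining these gives $2\cdot\mathbbm{1}=0$, so $Q_k$ is a quotient of $\F_2$ generated by $[\mathbbm{1}]$. We will check these by character values; for $n=2$, $\rho$'s are absent and the Klein four group is handled directly.

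\textbf{Main obstacle: nonvanishing.} The hard step is showing $[\mathbbm{1}]\neq0$ when $n=2^a$. We will construct an additive map $R_k(D_{2n})\to\F_2$ that sends each linear character to $1$ and each $\rho_j$ to $0$, and show it kills every $\Ind_H^G$.

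Every proper subgroup is either cyclic (inside $\langle r\rangle$, or of order $2$ generated by a reflection) or dihedral of smaller $2$-power order. So it suffices to check that each induced irreducible from such $H$ contains an even number of linear constituents, counted with multiplicity. By Frobenius reciprocity, the linear constituents of $\Ind_H^G\lambda$ are the linear characters of $G$ restricting to $\lambda$. Their number is either $0$ or $|H^\perp|$, where $H^\perp$ is taken in $\widehat{G/G'}\cong C_2\times C_2$ as in Lemma~14, applied to $HG'/G'$. This number is even because $H$ proper forces $HG'\neq G$, as $G'=\langle r^2\rangle$ is Frattini in a $2$-group. That establishes $Q_k(D_{2^{a+1}})\cong\F_2$.
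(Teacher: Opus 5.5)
Your proof is correct for $n\ge 2$ and has the same skeleton as the paper. The $\rho_j$ are induced from $\langle r\rangle$; $\langle s\rangle$ kills $[\mathbbm{1}]$ for $n$ odd; $\langle r^q,s\rangle$ kills it for $n=2^aq$ with $q>1$ odd; and a mod-$2$ functional detects $[\mathbbm{1}]$ when $n$ is a power of $2$. Two differences are worth recording.

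\textbf{The relations in the $2$-power case.} Your set of relations is actually the more complete one. The paper uses only $\Ind_{\langle s\rangle}$ and $\Ind_{\langle rs\rangle}$ of $\mathbbm{1}$ and $\epsilon$. Those four relations, together with $[V_j]=0$, are all killed by the additive map to $\Z$ given by $\mathbbm{1},\epsilon\mapsto 1$, $\delta,\epsilon\delta\mapsto -1$, $V_j\mapsto 0$. So on their own they cannot give $2[\mathbbm{1}]=0$. Your rotation relation $\Ind_{\langle r\rangle}^{D_{2n}}\mathbbm{1}=\mathbbm{1}+\varepsilon$ is exactly the missing input, and the paper never states it.

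\textbf{The detecting functional.} Your map (linear characters $\mapsto 1$, $\rho_j\mapsto 0$) is the paper's $\theta\colon[V]\mapsto\dim_k V \bmod 2$ in disguise. This is because $\dim V$ is congruent mod $2$ to the number of linear constituents of $V$. The paper checks $I(D_{2n})\subseteq\ker\theta$ in one line, from $\dim\Ind_H^G V=[G:H]\dim V$ with $[G:H]$ even; this also shows at once that $\theta$ is a ring homomorphism. Your route goes through Frobenius reciprocity, the coset-of-$\bar H^\perp$ count from Lemma~14, and $G'=\Phi(G)$ to get $HG'\neq G$. It is valid, but it uses more machinery to reach the same conclusion.

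\textbf{A caveat you share with the paper.} For $n=1$ we have $D_2\cong C_2$ and $Q_k(C_2)\cong\Z$. So the statement genuinely needs $n\ge 2$, and your odd-$n$ step silently uses that $\langle s\rangle$ is a proper subgroup.
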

\begin{proof}
    The irreducible representations of $D_{2n}$ are:
    \begin{itemize}
        \item One-dimensional representations coming from $r\mapsto\pm1$ and $s\mapsto\pm1$ with $r^n=1$.
        \item Two-dimensional representations $V_j$ for $1\le j\le\lfloor(n-1)/2\rfloor$ given by
        \[r\mapsto\left(
\begin{matrix}
\zeta^j & 0 \\
0 & \zeta^{-j}
\end{matrix}\right), \ s\mapsto\left(\begin{matrix}
0 & 1 \\
1 & 0
\end{matrix}\right)\]
where $\zeta$ is a primitive $n$-th root of unity. Write characters as $\chi_j$.
\end{itemize}

A first important observation is the following. Consider the linear character $\lambda_j$ of $C_n$ defined by $\lambda_j(r)=\zeta^j$. Then, we have
\[\Ind_{C_n}^{D_{2n}}\lambda_j=V_j\] for each $j$. Therefore, the two-dimensional representations vanish in $Q_k(D_{2n})$, so we are only interested in the one-dimensional representations. 

If $n$ is odd then there are only $2$ one-dimensional representations. Namely, $\mathbbm{1}$ and $\epsilon$ where
\[\epsilon(r)=1,\ \epsilon(s)=-1.\]
In this case, we have
\begin{align*}
    \Ind_{\langle s\rangle}^{D_{2n}}\mathbbm{1}&=\mathbbm{1}\oplus \bigoplus_jV_j\\
    \Ind_{\langle s\rangle}^{D_{2n}}\epsilon&=\epsilon\oplus \bigoplus_jV_j.
\end{align*} 
which implies $[\mathbbm{1}]=[\epsilon]=0$ in $Q_k(D_{2n})$, and thus $Q_k(D_{2n})=0.$

If $n$ is even then there are $4$ one-dimensional representations: $\mathbbm{1},\epsilon,\delta,\epsilon\delta$ where
\[\delta(r)=-1,\ \delta(s)=1.\]
Now, we have
\begin{align*}
    \Ind_{\langle s\rangle}^{D_{2n}}\mathbbm{1}&=\mathbbm{1}\oplus\delta\oplus\bigoplus_jV_j\\
    \Ind_{\langle s\rangle}^{D_{2n}}\epsilon&=\epsilon\oplus\epsilon\delta\oplus\bigoplus_jV_j\\
    \Ind_{\langle rs\rangle}^{D_{2n}}\mathbbm{1}&=\mathbbm{1}\oplus\epsilon\delta\oplus\bigoplus_jV_j\\
    \Ind_{\langle rs\rangle}^{D_{2n}}\epsilon&=\epsilon\oplus\delta\oplus\bigoplus_jV_j
\end{align*}
which implies
\begin{align*}
    [\mathbbm{1}]+[\delta]&=0\\
    [\epsilon] + [\epsilon][\delta] &=0 \\
    [\mathbbm{1}] + [\epsilon][\delta] &=0 \\
    [\epsilon] + [\delta] &=0. \\
\end{align*}
By reducing these equations we find that all classes are equal in $Q_k(D_{2n})$, and $2[\mathbbm{1}]=0$. Therefore, $Q_k(D_{2n})=0$ or $\F_2$. To decide which is true, we need to determine when $[\mathbbm{1}]=0$.

Suppose $n$ is not a power of $2$. Write $n=2^am$ for $m>1$ odd, and consider the proper subgroup $H=\langle r^m,s\rangle\cong D_{2^{a+1}}.$ We will show that the only one-dimensional component of $\Ind_H^G\mathbbm{1}_H$ is $\mathbbm{1}_{D_{2n}}$. Notice, $\epsilon(s)=-1$ and $\delta(r^m)=(-1)^m=-1$, so none of $\epsilon,\delta,\epsilon\delta$ restrict to $\mathbbm{1}_H$ on $H$. Thus, by Frobenius reciprocity,
\[\langle\Ind_H^{D_{2n}}\mathbbm{1}_H,\chi\rangle=\begin{cases}1&\chi=\mathbbm{1}_{D_{2n}}\\0&\chi=\epsilon,\delta,\epsilon\delta\end{cases}\]
and therefore
\[\Ind_H^{D_{2n}}\mathbbm{1}_H=\mathbbm{1}_{D_{2n}}\oplus\bigoplus_jm_jV_j\] for some integers $m_j$. This implies $[\mathbbm{1}_{D_{2n}}]=0$

Finally, suppose $n$ is a power of $2$. To establish that $[\mathbbm{1}]\ne0$ we consider the following dimension-mod-$2$ map:
\begin{align*}
    \theta:R_k(D_{2^a})&\rightarrow\F_2\\ [V]&\mapsto\dim_k(V) \mod 2.
\end{align*}
It is a ring homomorphism by properties of dimension over direct sums and products. Let $H<D_{2^a}$ be a subgroup and $V$ a representation of $H$. Then
\[\dim_k\Ind_H^{D_{2^a}}V=[D_{2^a}:H]\dim_kV.\]
Since $D_{2^a}$ is a $2$-group, for every proper subgroup $H$ we have that $[D_{2^a}:H]$ is nonzero and even. Therefore, 
\[\theta([\Ind_H^{D_{2^a}}V])=0\] for all $H<D_{2^a}$, and thus
\[I(D_{2^a})\subseteq\ker\theta.\] This means that $\theta$ factors through the quotient and reduces to a homomorphism
\[\bar\theta:Q_k(D_{2^a})\rightarrow\F_2.\]
But we know $\bar\theta([\mathbbm{1}])=1$, so the map is nonzero, and hence the domain $Q_k(D_{2^a})$ is also nonzero. Therefore, we must have $Q_k(D_{2^a})\cong\F_2$.

\end{proof}

\printbibliography

\end{document}